\newtheorem{defi}{Definition}[subsection]
\newtheorem*{defi*}{Definition}
\newtheorem{rmk}[defi]{Remark}
\newtheorem*{rmk*}{Remark}
\newtheorem{prop}[defi]{Proposition}
\newtheorem*{prop*}{Proposition}
\newtheorem{thm}[defi]{Theorem}
\newtheorem*{thm*}{Theorem}
\newtheorem{cor}[defi]{Corollary}
\newtheorem*{lmm*}{Lemma}
\newtheorem{lmm}[defi]{Lemma}
\newtheorem{ex}[defi]{Example}
\title{On a theory of the $b$-function in positive characteristic}
\author{Thomas Bitoun}
\address{Mathematical Institute, University of Oxford, Oxford OX2 6GG, UK}
\email{tbitoun@gmail.com}
\date{} 
\begin{document}

\newtheorem{MainThm}{Theorem} 
\renewcommand{\theMainThm}{\Alph{MainThm}}. 

\maketitle

\begin{abstract} 
We present a theory of the $b$-function (or Bernstein-Sato polynomial) in positive characteristic. 
Let $f$ be a non-constant polynomial with coefficients in a perfect field $k$ of characteristic $p>0.$ Its $b$-function $b_f$ is defined to be an ideal of the algebra of continuous $k$-valued functions on $\mathbb{Z}_p.$ The zero-locus of the $b$-function is thus naturally interpreted as a subset of $\mathbb{Z}_p,$ which we call the set of roots of $b_f.$ We prove that $b_f$ has finitely many roots and that they are negative rational numbers. 
Our construction builds on an earlier work of Musta{\c{t}}{\u{a}} and is in terms of $D$-modules, where $D$ is the ring of Grothendieck differential operators. We use the Frobenius to obtain finiteness properties of $b_f$ and relate it to the test ideals of $f.$ 
\end{abstract}

\tableofcontents
\addtocontents{toc}{\protect\setcounter{tocdepth}{2}}

\section*{Introduction}

\subsection*{Bernstein-Sato polynomial}

Let $f$ be a non-constant complex polynomial in $n$ variables. Let us consider after Bernstein the functional equation in $P(s)$ and $q(s)$: \begin{equation} \label{equation: Bernstein}
P(s)f^{s+1}= q(s)f^s, \tag{$\star$}
\end{equation} 
where $P(s)$ is an $s$-polynomial with coefficients in differential operators with polynomial coefficients in $n$ variables and $q(s)$ is a polynomial of the variable $s.$ The fundamental result is that there is always a solution $(P(s), q(s))$ of the functional equation such that the polynomial $q(s)$ is non-zero (\cite[Theorem 1']{MR0320735}). Since moreover the second components $q(s)$ of the solutions obviously form an ideal of $\mathbb{C}[s],$ one may consider the monic generator $b_f(s)$ of that ideal. It is called the Bernstein-Sato polynomial (or the $b$-function) of $f.$ 

The roots of the Bernstein-Sato polynomial are not arbitrary. Indeed Kashiwara showed the following (\cite[Corollary (5.2)]{MR0430304}):
\begin{MainThm} \label{MainThm: Kashiwara} The roots of $b_f(s)$ are strictly negative rational numbers.
\end{MainThm}

The consideration of the solutions to the functional equation above was originally motivated by the problem of analytically continuing the zeta function attached to $f,$ see \cite[Theorem 1]{MR0320735}. We would like to highlight two other types of consequences of that result.  

On one hand the existence of the Bernstein-Sato polynomial of a general non-constant polynomial is of theoretical interest for modules over rings of differential operators or $\mathcal{D}$-modules. Namely it was the latter's first application and has been crucial in singling out the importance of holonomicity (\cite[Corollary 1.4.b)]{MR0320735}). Moreover it is used to prove the stability of holonomicity under operations, e.g. direct image for an open embedding (\cite[\href{http://www.math.columbia.edu/~khovanov/resources/Bernstein-dmod.pdf}{p.25}]{BernsteinLN}), nearby and vanishing cycles (\cite[2.1. Key Lemma]{MR923134}).

On the other hand, the Bernstein-Sato polynomial of $f$ is a rich invariant of the singularity $\{f=0\},$ as was discovered by Malgrange (\cite{MR737934}). In particular the roots of $b_f(s)$ are related to very many invariants of the singularities of $f,$ see \cite{saito2006introduction}, \cite{MR1492525} and \cite{leykin2015survey} 
for surveys. Of motivational interest for us is 
the following result of Ein-Lazarsfeld-Smith-Varolin (\cite[Theorem B]{MR2068967}), relating the jumping coefficients of the multiplier ideals of $f$ to the roots of $b_f(s),$ see \cite{MR2068967} for the definitions:

\begin{MainThm} \label{MainThm: ELSV} Let $\lambda$ be a jumping coefficient of $f$ which lies in $(0,1].$ Then $-\lambda$ is a root of the Bernstein-Sato polynomial of $f.$
\end{MainThm}

In this paper, we put forth a natural definition of Bernstein-Sato polynomial (or $b$-function) in positive characteristic and show that it satisfies statements analogous to Theorems \ref{MainThm: Kashiwara} and \ref{MainThm: ELSV}.  

\subsection*{Positive characteristic} Let $k$ be a perfect field of positive characteristic $p$ and let $g$ be a non-constant $k$-polynomial in $n$ variables. 
A positive characteristic analogue of the multiplier ideals is provided by the (generalised) test ideals of Hara and Yoshida, see \cite{MR1974679}. 
Where the $F$-jumping exponents of the test ideals of $g$ are the pendant to the 
jumping coefficients of the multiplier ideals. 
These are rational numbers (\cite[Theorem 3.1]{MR2492440}). 

In accordance with the problematic set forth by Musta{\c{t}}{\u{a}} in \cite{MR2469353} and in analogy with Theorem \ref{MainThm: ELSV}, we would like the sought for $b$-function of $g$ to be related to its $F$-jumping exponents. It does thus seem preferable 
for the $b$-function to be an object living beyond the positive characteristic. 

The theory we propose here is $p$-adic, in the sense that the roots of our $b$-function are naturally $p$-adic integers. To describe it, let us first consider the usual abstraction of Bernstein's functional equation, see e.g. \cite[\S1]{MR0430304}.   

Let $\mathcal{D}$ be the ring of complex differential operators with polynomial coefficients in $n$ variables, and let $\mathcal{O}$ be the ring of complex polynomials in $n$ variables. We use the notation $\mathcal{D}[s]$ for the algebra $\mathcal{D}\otimes_{\mathbb{C}}\mathbb{C}[s],$ where the variable $s$ is central. For a non-constant element $f$ of $\mathcal{O},$ let $\mathcal{O}[\frac{1}{f}][s]f^s$ be the free $\mathcal{O}[\frac{1}{f}][s]$-module of rank one with generator $f^s.$ 
It is endowed with a compatible action of $\mathcal{D}[s]$  
by making a derivation $\partial$ in $\mathcal{D}$ 
act on the generator $f^s$ by $\partial \cdot f^s:= s \partial(f)f^{-1}f^s.$ The left $\mathcal{D}[s]$-module $\mathcal{D}[s]f^s$ is the $\mathcal{D}[s]$-submodule of $\mathcal{O}[\frac{1}{f}][s]f^s$ generated by $f^s$ and the one 
generated by $f\cdot f^s$ is denoted $\mathcal{D}[s]f^{s+1}.$ 
It is clear that there is a differential operator $P(s)\in \mathcal{D}[s]$ such that $(P(s), q(s))$ is a solution to Bernstein's functional equation (\ref{equation: Bernstein}) if and only if the polynomial $q(s)$ annihilates the $\mathcal{D}[s]$-module $\mathcal{D}[s]f^s/\mathcal{D}[s]f^{s+1}.$ Thus the Bernstein-Sato polynomial of $f$ is the monic generator of the annihilator of the action of $\mathbb{C}[s]$ on $\mathcal{D}[s]f^s/\mathcal{D}[s]f^{s+1}.$

In the positive characteristic theory, the r\^ole of $\mathbb{C}[s]$ is played by the $k$-algebra $A_k$ generated by the binomial coefficients functions modulo $p.$ It is not principal but there is nevertheless a natural notion of roots of an ideal. Indeed $A_k$ is isomorphic to $k[Y_e; e\in\mathbb{N}]/ (Y_e^p-Y_e; e\in\mathbb{N})$ and the generators are canonically ordered. Thus for each homomorphism $\alpha$ from $A_k$ to a field, $\alpha(Y_e)$ is in the prime field, for all $e.$ We associate to $\alpha$ the $p$-adic integer $\Sigma_e\alpha_ep^e,$ where $\alpha_e$ is the unique lift of
$\alpha(Y_e)$ to $\{0, \dots, p-1\}.$ In particular to each maximal ideal of $A_k,$ we may attach a $p$-adic integer for $\alpha$ the canonical quotient homomorphism. In fact, we show that $A_k$ is isomorphic to the $k$-algebra $\Lambda_k$ of continuous $k$-valued functions on the $p$-adic integers. Thus to each ideal $I$ of $A_k$ corresponds a set of $p$-adic integers. Namely those associated by the above to the maximal ideals containing $I.$ We call them the roots of $I.$ 

Our approach to the $b$-function of a non-constant polynomial $g$ in positive characteristic is to consider a left $D\otimes_kA_k$-module $N_g^\gamma$ for $D$ the ring of Grothendieck differential operators, which is thought of as the positive characteristic analogue of $\mathcal{D}[s]f^s/\mathcal{D}[s]f^{s+1}$ (for $g$ instead of $f$), see Definition \ref{defi: N_f}. We then define the $b$-function $b_g$ of $g$ to be the annihilator of the action of $A_k$ on $N_g^\gamma.$ The roots of $b_g$ satisfy strong finiteness properties. Indeed, here are those corresponding to Theorems \ref{MainThm: Kashiwara} and \ref{MainThm: ELSV}:  

\begin{MainThm} The roots of the $b$-function of $g$ form a finite set of strictly negative rational numbers. Furthermore they are not smaller than $-1.$ 
\end{MainThm}

In fact, we have the following precise relation to the $F$-jumping exponents of $g,$ see Theorem \ref{thm:periodic expansion of roots}.

\begin{MainThm} Let $\lambda$ be an $F$-jumping exponent of $g$ in $(0,1]\cap\mathbb{Z}_{(p)}.$ Then $-\lambda$ is a root of the $b$-function of $g.$ Moreover this exhausts the roots of $b_g.$
\end{MainThm}

In order to prove our main results, we study in some depth the $D$-module structure of $N_g^\gamma.$ We show that it is a finitely generated unit $F$-module (Theorem \ref{thm:structure N_f}), which serves as a replacement for holonomicity. In fact we exhibit an explicit generator of the unit $F$-module $N_g^\gamma$ expressed in terms of the test ideals of $g.$ This is what ultimately allows us to relate the roots of the $b$-function to the $F$-jumping exponents and prove Theorem \ref{thm:periodic expansion of roots}. 

\subsection*{Further comments}

\subsubsection*{Relation to the work of Musta{\c{t}}{\u{a}} and motivation}

We would like to briefly comment on the relationship between this work and \cite{MR2469353}. In \cite{MR2469353}, which was our starting point, Musta{\c{t}}{\u{a}} uses the action of Euler operators on bounded level analogues of $N_g$ to construct a whole sequence
of Bernstein-Sato-type polynomials and his main result is that the information provided by these invariants corresponds to that provided by the $F$-jumping exponents of $g.$ However, this correspondence leaves open the question of what is the natural analogue of the Bernstein-Sato polynomial. This is the question addressed here. We do so by focusing on $D$-modules (as opposed to $D^{(e)}$-modules with bounded divided powers) and hence on the whole algebra of higher Euler operators. This leads us to a new notion of $b$-function (or Bernstein-Sato polynomial), naturally $p$-adic. 

\subsubsection*{Nearby cycles} 

We have generalised our theory to unit $F$-modules coefficients and argue that these may be called nearby cycles. This will appear elsewhere.

\subsubsection*{Frobenius structure} The action of Frobenius on $N_g$ is given here via an explicit root for the corresponding unit $F$-structure. It would be preferable to have a direct description. We have failed to obtain one so far.

\subsubsection*{Relation to the Bernstein-Sato polynomial} 
In view of the analogy between the Bernstein-Sato polynomial of a, say, complex polynomial and the theory of the $b$-function presented here, it is natural to ask about the comparison between the Bernstein-Sato polynomial of a polynomial $f$ with say rational coefficients and the $b$-function of its reduction $f_p$ modulo a prime $p,$ for $p$ large. 

The situation is subtle, as shown in Section \ref{section: ex}. Strikingly, there are polynomials $f$ such that there are arbitrary large primes $p$ for which there is a root of the $b$-function of $f_p$ which is not a root of the Bernstein-Sato polynomial of $f,$ see Example \ref{ex: Takagi}. 

\subsection*{Acknowledgements}

This theory of the $b$-function in characteristic $p$ sprang from my attempt to understand \cite{MR2469353}. I am indebted to Mircea Musta{\c{t}}{\u{a}} for the work done there. I would also like to thank him for initially mentioning the problem and answering many questions about test ideals. I thank Konstantin Ardakov and Francesco Baldassarri for introducing me to Mahler's Theorem, thus clarifying the appearance of $\mathbb{Z}_p$ in the theory. Shunsuke Takagi kindly pointed me towards Example \ref{ex: Takagi}. Finally I would like to thank Roman Bezrukavnikov and Pavel Etingof for interesting discussions at the very beginning of this project. 
The author was partially supported by EPSRC grant EP/L005190/1.

\section{Construction} \label{section: construction}
\subsection{The algebra of higher Euler operators} 

Let $k$ be a field of characteristic $p>0$ and let $R$ be a commutative $k$-algebra. In the sequel, we will use the notation $D_R$ for the ring of global sections of the sheaf of Grothendieck differential operators $D_{Spec(R)/k}.$
We refer to \cite[16.8]{MR0238860} for a general treatment of the sheaf of rings of Grothendieck differential operators. 

After \cite{MR2469353}, we set the \textit{higher Euler operators} to be the following differential operators in $D_{k[t]}:$ 

\begin{defi} Let $e\geq0$ be a natural number. The \emph{$e$-th higher Euler operator} is 
$$\nu_e:=\frac{d}{dt}^{[p^e]}t^{p^e},$$ where $\frac{d}{dt}^{[p^e]}$ is the divided power differential operator such that for all $n\geq0,$ $\frac{d}{dt}^{[p^e]}(t^n)= {n\choose p^e}t^{n-p^e}.$ (We set ${n \choose m}=0$ for $n<m.$)
\end{defi}

\begin{defi} The \emph{$k$-algebra of higher Euler operators} $\Gamma_k$ is the unital $k$-subalgebra of $D_{k[t]}$ generated by the higher Euler operators. 
\end{defi}

For all $n\geq0,$ let ${s \choose n}$ denote the $n$-th binomial coefficient function modulo $p,$ $$\mathbb{N}\to \mathbb{F}_p, l\mapsto {l \choose n}.$$ We will consider them as $k$-valued functions.

\begin{defi} The \emph{$k$-algebra of binomial coefficients} $k[{s \choose p^0}, {s \choose p^1}, {s \choose p^2}, \cdots]$ is the $k$-subalgebra of the algebra of $k$-valued functions on $\mathbb{N}$ generated by the binomial coefficients functions $\{{s \choose n}; n\in \mathbb{N}\}.$\end{defi}

Let us recall the classical theorem of Lucas (\cite{MR1483922}).

\begin{thm} \label{thm: Lucas} Let $m$ and $n$ be natural numbers and let $p$ be a prime number. The binomial coefficient ${m \choose n}$ modulo $p$ is given by:
$${m \choose n}= \Pi_{e=0}^{\infty}{m_e \choose n_e} \text{ mod }p,$$ where $m= \Sigma_{e=0}^{\infty}m_ep^e$ (resp. $n= \Sigma_{e=0}^{\infty}n_ep^e$) is the base $p$ expansion of $m$ (resp. $n$).
(We set ${0 \choose 0}=1.$)
\end{thm}

\begin{cor} \label{cor:presentation} The $k$-algebra of binomial coefficients is generated by the binomial coefficients $\{{s \choose p^e}; e\in \mathbb{N}\}.$ This justifies our choice of notation. Moreover, the kernel of the surjective morphism of $k$-algebras $\pi_k:$ $$k[Y_e; e\geq0] \to k[{s \choose p^0}, {s \choose p^1}, {s \choose p^2}, \cdots], Y_e\mapsto {s\choose p^e}$$ is the ideal generated by $\{Y_e^p-Y_e; e\in \mathbb{N}\}.$

\end{cor} 

\begin{proof} Let $n$ be a natural number and let $n= \Sigma_{e=0}^{N}n_ep^e$ be its base $p$ expansion. It follows directly from Lucas' Theorem that $${s \choose n}= \Pi_{e=0}^{N} {{s \choose p^e} \choose n_e}.$$ This proves that the $k$-algebra of binomial coefficients is generated by the binomial coefficients $\{{s \choose p^e}; e\in \mathbb{N}\}.$ 

Since the ${s \choose p^e}$ are $\mathbb{F}_p$-valued functions, it is clear that they satisfy the relations ${s \choose p^e}^p={s \choose p^e}.$ Thus $\{Y_e^p-Y_e; e\in \mathbb{N}\}$ is contained in the kernel of $\pi_k.$ Let us prove that the kernel is generated by $\{Y_e^p-Y_e; e\in \mathbb{N}\}.$
Since $\pi_k=k\otimes_{\mathbb{F}_p}\pi_{\mathbb{F}_p}$ and $k$ is flat over $\mathbb{F}_p,$ it is enough to show the assertion for $\pi_{\mathbb{F}_p}.$ 

Let $P$ be in the kernel. That is $P$ is a $\mathbb{F}_p$-polynomial in $N$ variables for a natural number $N$ and $P({s \choose p^0}, \dots, {s \choose p^{N-1}})=0.$ Thus by Lucas' Theorem, $P$ is in the kernel of the evaluation morphism: $$\mathbb{F}_p[Y_0, \dots, Y_{N-1}] \to Fun(\mathbb{F}_p^N, \mathbb{F}_p),$$ where $Fun(\mathbb{F}_p^N, \mathbb{F}_p)$ is the algebra of $\mathbb{F}_p$-valued functions on $\mathbb{F}_p^N.$ One easily sees by counting dimensions that this implies that $P$ lies in the ideal generated by $\{Y_e^p-Y_e;0\leq e\leq N-1\}.$ This concludes the proof of the corollary.\qedhere

\end{proof}

The $k$-algebra of higher Euler operators and the $k$-algebra of binomial coefficients are isomorphic. Let us fix an isomorphism. 

\begin{lmm} \label{lmm: gamma}The assignment ${s \choose p^e}\mapsto -\nu_e$ induces a morphism $\gamma$ of $k$-algebras: 
$$k[{s \choose p^0}, {s \choose p^1}, {s \choose p^2}, \cdots] \overset{\gamma}{\to}\Gamma_k.$$ The morphism $\gamma$ is an isomorphism.
\end{lmm}

\begin{proof} Let us first note that the definition of the higher Euler operators provides a natural embedding of $\Gamma_k$ into the algebra of binomial coefficients. Indeed, since $D_{\mathbb{A}_k^1}$ acts faithfully on $k[t],$ the higher Euler operators are characterised by their action on the monomials. It is given by $\nu_e(t^n)=\frac{d}{dt}^{[p^e]}t^{p^e}(t^n)={p^e+n \choose p^e}t^n= (1+{n \choose p^e})t^n,$ the last equality holding by Lucas' Theorem. We thus see that the assignment $\nu_e \mapsto 1+{s \choose p^e}$ defines an embedding of $\Gamma_k$ into the $k$-algebra of binomial coefficients. It clearly is an isomorphism.

We are thus left with having to show that the assignment ${s \choose p^e}\mapsto -1-{s \choose p^e}$ induces an automorphism of $k[{s \choose p^0}, {s \choose p^1}, {s \choose p^2}, \cdots].$ This is obvious from the presentation given in Corollary \ref{cor:presentation}.\qedhere
\end{proof}

\begin{rmk} The isomorphism $\gamma$ is the analogue of the isomorphism $$\mathbb{C}[s]\to \mathbb{C}[\frac{d}{dt}t], s\mapsto -\frac{d}{dt}t$$ from the complex theory.
\end{rmk}

Next we explicitly relate the algebra of binomial coefficients to the $p$-adic integers.

\begin{thm} \label{thm:p-adic roots} Let $\mathbb{F}_p$ and $k$ be endowed with the discrete topology. 
\begin{enumerate}
\item \label{item: bincontinuous}
For all natural numbers $e,$ the binomial coefficient function ${s \choose p^e}$ modulo $p$ extends to a continuous $\mathbb{F}_p$-valued function $c_e$ on $\mathbb{Z}_p,$ such that $c_e(z)=z_e \text{ mod } p,$ where $z=\Sigma_{e\geq0}z_ep^e$ is the $p$-adic expansion. This induces an embedding $i$ of $k$-algebras: $$k[{s \choose p^0}, {s \choose p^1}, {s \choose p^2}, \cdots] \overset{i}{\to} C(\mathbb{Z}_p, k),$$ where $C(\mathbb{Z}_p, k)$ is the $k$-algebra of continuous $k$-valued functions on $\mathbb{Z}_p.$ The morphism $i$ is an isomorphism.

\item \label{item: maxcontinuous}
Moreover, for each $p$-adic integer $z,$ let $ev_z$ be the evaluation morphism: $C(\mathbb{Z}_p, k) \to k, f\mapsto f(z).$ The map $\epsilon$ from the $p$-adic integers to the set of maximal ideals $Max(C(\mathbb{Z}_p, k))$ of $C(\mathbb{Z}_p, k)$ given by $$\mathbb{Z}_p \overset{\epsilon}{\to} Max(C(\mathbb{Z}_p, k)), z \mapsto ker(ev_z)$$ is a bijection. 
\end{enumerate}
\end{thm}

\begin{proof} 
(\ref{item: bincontinuous}) The projection to the $e$-th coefficient in the $p$-adic expansion is a continuous map from $\mathbb{Z}_p$ to $\mathbb{F}_p.$ It is thus continuous as a function from $\mathbb{Z}_p$ to $k.$ Moreover it coincides with ${s \choose p^e}$ on $\mathbb{N}$ by Lucas' Theorem (Theorem \ref{thm: Lucas}). This provides the morphism $i$ of the statement. It is injective since $\mathbb{N}$ is dense in $\mathbb{Z}_p.$ As the ${s \choose p^e}$ generate the algebra of binomial coefficients by Corollary \ref{cor:presentation}, it is a direct consequence of Mahler's Theorem, see e.g. \cite[III 1.2.4]{MR0209286}, that $i$ is surjective. 

(\ref{item: maxcontinuous}) Since ${s \choose p^e}$ is the projection to the $e$-th coordinate in the $p$-adic expansion, it is clear that the map $\mathbb{Z}_p \overset{\epsilon}{\to} Max(C(\mathbb{Z}_p, k))$ is injective. Indeed if $z=\Sigma_{e\geq0}z_ep^e$ is different from $z'=\Sigma_{e\geq0}z_e'p^e,$ then there is a number $l\geq0$ such that $z_l$ is different from $z_l'.$ Thus ${s \choose p^l} - z_l$ belongs to $ker(ev_z)$ but not to $ker(ev_{z'}).$

Let us show that $\epsilon$ is surjective. 
Let $z=\Sigma_{e\geq0}z_ep^e,$ the maximal ideal $ker(ev_z)$ is generated by $\{{s \choose p^e} - z_e; e\geq 0\}.$ Indeed the set $\{{s \choose p^e} - z_e; e\geq 0\}$ is contained in $ker(ev_z),$ and since the ideal generated by $\{{s \choose p^e} - z_e; e\geq 0\}$ is obviously maximal, it has to be equal to $ker(ev_z).$
Let $\mathfrak{m}$ be a maximal ideal of $C(\mathbb{Z}_p, k)$ and let $$C(\mathbb{Z}_p, k) \overset{m}{\to} K:= C(\mathbb{Z}_p, k)/\mathfrak{m}$$ be the quotient morphism. Since ${s \choose p^e}^p={s \choose p^e}, m({s \choose p^e})$ is in the prime field $\mathbb{F}_p,$ for all $e.$ Let $a_e$ be the lift of $m({s \choose p^e})$ to $\{0, \dots, p-1\}$ and let $a=\Sigma_{e\geq0}a_ep^e.$ The maximal ideal $\mathfrak{m}$ contains $ker(ev_a).$ They are thus equal. This shows that $\epsilon$ is surjective and concludes the proof of the theorem.\qedhere
\end{proof} 

Let us single out a special word for the vanishing locus of an ideal. 

\begin{defi} 

Let $I$ be an ideal of the $k$-algebra of binomial coefficients. The \emph{roots} of $I$ are the $p$-adic integers corresponding by Theorem \ref{thm:p-adic roots} to the maximal ideals containing $I.$

\end{defi}

The following proposition shows that the naive notion of multiplicity is not so interesting for the algebra of binomial coefficients. 

\begin{prop} \label{prop: multiplicity}
\begin{enumerate}

\item \label{item: Jacobson}
The $k$-algebra of binomial coefficients is a Jacobson ring.
\item \label{item: radical}
The ideals of $\mathbb{F}_p[{s \choose p^0}, {s \choose p^1}, {s \choose p^2}, \cdots]$ are radical. They are thus characterised by their sets of roots. 
\end{enumerate}
\end{prop}

\begin{proof} 
(\ref{item: Jacobson}) Let $A$ denote the $k$-algebra of binomial coefficients. Let us show that every finitely generated $A$-algebra that is a field is a finitely generated $A$-module. This is one of the characterisations of Jacobson rings, see e.g. \cite[Ch.5 Exercise 25]{MR0242802}. Let $K$ be such a field and let $\upsilon$ be the homomorphism of $k$-algebras $$A \overset{\upsilon}{\to} K, a \mapsto a.1.$$ 
Since ${s \choose p^e}^p={s \choose p^e}$ for all $e\geq0,$ its image by $\upsilon$ is in the prime field and accordingly $\upsilon(A)=k.$ It is well-known that fields are Jacobson rings. Therefore $K$ is a finitely generated $k$-module. Hence it is a finitely generated $A$-module. We have thus proved that $A$ is a Jacobson ring.

(\ref{item: radical}) Let $f$ be an element of $\mathbb{F}_p[{s \choose p^0}, {s \choose p^1}, {s \choose p^2}, \cdots] \cong C(\mathbb{Z}_p, \mathbb{F}_p).$ Clearly $f^{p^n}=f,$ for all natural numbers $n.$ Suppose that $f$ is in the radical of an ideal $I,$ i.e. $f^N$ is in $I$ for some $N\geq0.$ Let $n\geq0$ be such that $p^n\geq N.$ We have that $f=f^{p^n}$ is in $I.$ Thus the ideal $I$ is radical. Since by (\ref{item: Jacobson}), the algebra of binomial coefficients is radical, we have that every radical ideal is the intersection of maximal ideals. Hence every ideal of $\mathbb{F}_p[{s \choose p^0}, {s \choose p^1}, {s \choose p^2}, \cdots]$ is the intersection of the maximal ideals containing it. It is thus characterised by its set of roots.\qedhere

\end{proof}

\subsection{Definition of the $b$-function} 
Let $k$ be a field of characteristic $p>0$ and let $R$ be a commutative $k$-algebra. 
Recall that we denote by $D_R$ the ring of Grothendieck differential operators of $R$ over $k.$ 

\begin{defi} 
Let $R$ be a smooth commutative $k$-algebra and let $f$ be an element of $R.$ Denote by $R[t]$ the ring of polynomials in one variable $t$ over $R.$
\begin{enumerate}
\item
There is an inclusion of left $D_{R[t]}$-modules $R[t] \subset R[t][\frac{1}{f-t}].$ Let \emph{$B_f$} be the quotient left $D_R{[t]}$-module $R[t][\frac{1}{f-t}]/R[t].$

\item
Let $D_R[\nu_e;e \in \mathbb{N}]$ be the subring of $D_{R[t]}$ generated by $D_R$ and the higher Euler operators $\nu_e:=\frac{d}{dt}^{[p^e]}t^{p^e},$ for all $e\geq0.$
The \emph{delta function of $f$} is the class $\delta_f$ of $\frac{1}{f-t}$ in $B_f.$ Let \emph{$M_f$} be the left $D_R[\nu_e;e \in \mathbb{N}]$-submodule of $B_f$ generated by $\delta_f.$

\end{enumerate}
\end{defi}

\begin{lmm} \label{lmm:tEuler} For all natural numbers $e,$ we have the following identity in $D_{\mathbb{F}_p[t]}:$
$$[\nu_e,t]=t \Pi_{j=0}^{j=e-1} (1- \nu_j^{p-1}),$$
where the product over the empty set, i.e. for $e=0,$ is $1.$ In particular, the k-algebra of higher Euler operators $\Gamma_k$ satisfies $\Gamma_kt=t\Gamma_k$ in $D_{k[t]}.$\end{lmm}

\begin{proof} For the identity to hold in $D_{\mathbb{F}_p[t]},$ it is enough that both sides agree when evaluated at monomials $t^N,$ for all natural numbers $N\geq0.$ 

One easily sees that,
for the left-hand side: $$[\nu_e,t](t^N):=(\nu_et-t\nu_e)(t^N)= {N \choose p^e-1}t^{N+1},$$ and for the right-hand side: $$t \Pi_{j=0}^{j=e-1} (1- \nu_j^{p-1})(t^N)=\Pi_{j=0}^{j=e-1} (1- ({N\choose p^j}+1)^{p-1})t^{N+1}.$$ Since $n^{p-1}=1\text{ mod }p$ if and only if $n\not=0\text{ mod }p,$ one has that 
$$\Pi_{j=0}^{j=e-1} (1- ({N\choose p^j}+1)^{p-1})=0\text{ mod }p$$ if and only if there is a number $j$ in $\{0,\dots, e-1\}$ such that ${N \choose p^j}\not=p-1\text{ mod }p.$
By Lucas' Theorem, this is the case if and only if $$N\not=\Sigma_{j=0}^{j=e-1}(p-1)p^j\text{ mod }p^e.$$
Since $\Sigma_{j=0}^{j=e-1}(p-1)p^j=p^e-1,$ it is thus clear using Lucas' Theorem again that both sides of the identity evaluated at $t^N$ vanish if $N\not=p^e-1\text{ mod }p^e$ and that they are equal to $t^{N+1},$ otherwise.
This concludes the proof of the identity.

The inclusion $\Gamma_kt\subset t\Gamma_k$ follows immediately from the identity. The reverse inclusion $t\Gamma_k \subset \Gamma_kt$ follows from the identity and an easy induction. Namely, it suffices to show that for all natural numbers $m, n_0, \dots, n_m,$ the monomial $t\nu_m^{n_m}\dots\nu_0^{n_0}$ is in $\Gamma_kt.$ Let us well-order the finite sequences $(m, n_m, \dots, n_0)$ by the lexicographic order and induct on it. It is clear that for the smallest element $(0,0), t\nu_0^0=t$ is in $\Gamma_kt.$ Consider a sequence $(m, n_m, \dots, n_0).$ If $n_m=0,$ then $t\nu_m^{n_m}\dots\nu_0^{n_0}=t\nu_{m-1}^{n_{m-1}}\dots\nu_0^{n_0},$ corresponding to the smaller $(m-1,n_{m-1}, \dots, n_0).$ It is thus in $\Gamma_kt$ by the induction hypothesis. Suppose that $n_m\geq1,$ then by the identity, 

$\begin{array}{lll} 
t\nu_m^{n_m}\dots\nu_0^{n_0} &=& t\nu_m\nu_m^{n_m-1}\dots\nu_0^{n_0} \\&=& \nu_mt\nu_m^{n_m-1}\dots\nu_0^{n_0}- t \Pi_{j=0}^{j=m-1} (1- \nu_j^{p-1})\nu_m^{n_m-1}\dots\nu_0^{n_0},
\end{array}$

\noindent which is in $\Gamma_kt$ by the induction hypothesis. This concludes the proof of the lemma.\qedhere
 
\end{proof}

\begin{cor} \label{cor:tM_f} The left $D_R$-submodule $tM_f$ of $M_f$ is stable under the action of the higher Euler operators. Hence the quotient $N_f:= M_f/tM_f$ is a left $D_R[\nu_e;e \in \mathbb{N}]$-module.
\end{cor}

\begin{proof} By Lemma \ref{lmm:tEuler}, $\Gamma_ktM_f=t\Gamma_kM_f=tM_f.$\qedhere

\end{proof}

We can now give the definition of the $b$-function.

\begin{defi} \label{defi: N_f}
Let $k$ be a field of characteristic $p>0,R$ a smooth $k$-algebra and let $f$ be an element of $R.$ 
The left $D_R[\nu_e;e \in \mathbb{N}]$-module $N_f$ is in particular a $\Gamma_k$-module. Recall that we fixed in Lemma \ref{lmm: gamma} an isomorphism $$k[{s \choose p^0}, {s \choose p^1}, {s \choose p^2}, \cdots] \overset{\gamma}{\to}\Gamma_k.$$ 
We denote by $N_f^{\gamma}$ the $k[{s \choose p^0}, {s \choose p^1}, {s \choose p^2}, \cdots]$-module deduced from $N_f$ by the isomorphism $\gamma.$ 
The \emph{$b$-function} $b_f$ of $f$ is the annihilator of $N_f^{\gamma}$ in $k[{s \choose p^0}, {s \choose p^1}, {s \choose p^2}, \cdots].$
\end{defi}

\begin{rmk} 
We note that the definitions of $B_f, M_f,$ and hence of $N_f,$ are local on $Spec(R).$ This allows us to globalise the definition of the $b$-function to $f$ a function on a smooth $k$-variety, for example. We leave the details to the reader.
\end{rmk}

\subsection{Bounded level versions of $N_f$} \label{subsection: bounded level} 

Here we start analysing $N_f$ by considering differential operators of bounded level. Let us recall some  definitions. 
From here on, let us suppose that the base field $k$ is perfect.

\begin{defi} Let $p$ be a prime number. Let $R$ be a commutative algebra of characteristic $p.$ 
The \emph{Frobenius endomorphism} of $R$ is the ring endomorphism $F$ of $R$ raising elements to their $p$-th power, $$R \overset{F}{\to}R,$$ 
$$r \mapsto r^p.$$ 

\end{defi}

\begin{defi} Let $k$ be a perfect field of characteristic $p>0, R$ a smooth $k$-algebra and $n$ a natural number $\geq0.$ Let $R^{(n+1)}$ be the $R$-module $R\overset{F^{n+1}}{\to}R.$ The ring $D_R^{(n)}$ of \emph{differential operators of level $n$} on $Spec(R)$ is the ring of $R$-linear endomorphisms of $R^{(n+1)}.$
\end{defi}

\begin{prop} Let $l$ be a natural number. The differential operators of level $l$ are differential operators. Namely the canonical inclusion of $D_R^{(l)}$ in $End_k(R)$ factors through $D_R.$ Moreover, $D_R=\bigcup_{n\geq0}D_R^{(n)}.$
\end{prop}

\begin{proof} This is well-known. For example, it follows from \cite[Proposition 2.2.7]{MR1373933} and  \cite[(2.2.1.7)]{MR1373933}.\qedhere\end{proof}

We clearly have that the differential operators of level $n$ in $D_R[\nu_e;e \in \mathbb{N}]$ are $D_{R[t]}^{(n)} \cap D_R[\nu_e;e \in \mathbb{N}]= D_R^{(n)}[\nu_e; 0\leq e\leq n].$ It is thus natural to consider the subsequent bounded level versions of $M_f$ and $N_f.$

\begin{defi} Let $l$ be a natural number. The left $D_R^{(l)}[\nu_e; 0\leq e\leq l]$-module \emph{$M_f^{(l)}$} is $$M_f^{(l)}:= D_R^{(l)}[\nu_e; 0\leq e\leq l]\delta_f \subset M_f.$$
By Lemma \ref{lmm:tEuler}, we have that the left $D_R^{(l)}$-submodule $tM_f^{(l)}$ of $M_f^{(l)}$ is a left $D_R^{(l)}[\nu_e; 0\leq e\leq l]$-submodule. The left $D_R^{(l)}[\nu_e; 0\leq e\leq l]$-module \emph{$N_f^{(l)}$} is set to be the quotient, $$N_f^{(l)}:=M_f^{(l)}/tM_f^{(l)}.$$
\end{defi}

Let $l$ be a natural number and let $M$ be a $R$-module.  
By functoriality of the tensor product $-\otimes_R M,$ the \emph{Frobenius pull-back} $R$-module $({F^{l+1}})^*M:= R^{(l+1)} \otimes_R M$ is acted upon on the left by the ring of $R$-linear endomorphisms of $R^{(l+1)},$ i.e. by $D_R^{(l)}.$ The Frobenius pull-back is actually an equivalence of categories.

\begin{prop} Let $k$ be a perfect field of characteristic $p>0$ and $R$ a smooth $k$-algebra. Let $l$ be a natural number. The Frobenius pull-back $(F^{l+1})^*$ induces an equivalence from the category of finitely generated $R$-modules to the category of finitely generated left $D_R^{(l)}$-modules.\end{prop}

\begin{proof} This is a well-known instance of a Morita equivalence. For example, see \cite[Proposition 3.2.(1)]{MR883872}.\qedhere\end{proof}

Subsection \ref{subsection: N_f test} is devoted to describing a preimage of $N_f^{(l)}$ under this equivalence.

\subsection{Test ideals} In this subsection and the next, unless otherwise mentioned, we use the following notations:  $k$ is a perfect field of characteristic $p>0, R$ is a smooth $k$-algebra and $f$ is an element of $R.$

Let us recall the definition of the test ideals of $f$ and sum up the properties we will use.

\begin{defi} Let $e$ be a natural number and $J$ an ideal of $R.$ 
The \emph{$p^e$-th root ideal} $J^{[\frac{1}{p^e}]}$ of $J$ is the intersection of all the ideals $I$ of $R$ such that $I^{[p^e]}\supset J,$ where $I^{[p^e]}$ is the ideal of $R$ generated by the $p^e$-th powers of the elements of $I.$ 

\end{defi}

\begin{lmm} 
Let $\lambda$ be a non-negative real number and $e$ a natural number. There is an inclusion of ideals of $R,$

$$(f^{\lceil\lambda p^e\rceil}R)^{[\frac{1}{p^e}]} \subset (f^{\lceil\lambda p^{e+1}\rceil}R)^{[\frac{1}{p^{e+1}}]},$$ where $\lceil \rceil$ is the ceiling function which maps a real number to the smallest greater integer.

\end{lmm}

\begin{proof} The lemma is a special case of \cite[Lemma 2.8]{MR2492440}.\qedhere\end{proof}

\begin{defi} Let $\lambda$ be a non-negative real number. 
The \emph{test ideal of exponent $\lambda$ of $f$} is the following ideal of $R,$ $$\tau(f^\lambda):= \bigcup_{e\geq0} (f^{\lceil\lambda p^e\rceil}R)^{[\frac{1}{p^e}]}.$$

\end{defi}

The test ideal of exponent $\lambda$ of $f$ clearly decreases as $\lambda$ increases.

\begin{lmm} Let $\lambda$ be a non-negative real number. There is a real number $\epsilon>0$ such that the test ideal of exponent $\lambda'$ of $f$ is $\tau(f^{\lambda}),$ for every non-negative real number $\lambda'$ in the interval $[\lambda,\lambda+\epsilon).$ \end{lmm}

\begin{proof} This is a special case of \cite[Corollary 2.16]{MR2492440}.\qedhere\end{proof}

\begin{defi} A \emph{$F$-jumping exponent of $f$} is a positive real number $\lambda$ such that for all real numbers $\epsilon >0$ such that the difference $\lambda-\epsilon$ is positive, $\tau(f^{\lambda - \epsilon}) \neq \tau(f^{\lambda}).$
\end{defi}

The following finiteness result is crucial to us.

\begin{thm} \label{thm:F-jumps} The set of $F$-jumping exponents of $f$ is 

\begin{enumerate}
\item \label{thm:F-jumps discrete} a discrete subset of $\mathbb{R}$
\item \label{thm:F-jumps rational} a subset of $\mathbb{Q}.$
\end{enumerate}
\end{thm}

\begin{proof} This is \cite[Theorem 3.1]{MR2492440}, for a principal ideal.\qedhere \end{proof}

We will also use the following definition.

\begin{defi} Let $\lambda$ be a positive real number. The \emph{test ideal immediately preceding $\tau(f^{\lambda})$} is $\tau(f^{\lambda^{-}}):= \cap_{\mu<\lambda}\tau(f^{\mu}).$ 
The \emph{associated graded to the test ideal filtration of $R$ with respect to $f$} is 

$$\mathfrak{gr}_{\lambda \in (0,1]}(\tau(f^{\lambda})):= \bigoplus_{\lambda \in (0,1]}\tau(f^{\lambda^{-}})/\tau(f^{\lambda}).$$ 
\end{defi}

\begin{lmm} \label{lmm:associated graded test} Let $\lambda$ be a positive real number. Denote by $\lambda'$ the supremum of the subset $S_{\lambda}$ of the real numbers containing $0$ and the $F$-jumping exponents of $f$ strictly smaller than $\lambda.$
Since $S_\lambda$ is discrete by (\ref{thm:F-jumps discrete}) of Theorem \ref{thm:F-jumps}, $\lambda'$ is in $S_\lambda.$

The test ideal immediately preceding $\tau(f^\lambda)$ is $\tau(f^{\lambda})$ if and only if $\lambda$ is not a $F$-jumping exponent of $f.$ If $\lambda$ is a $F$-jumping exponent of $f,$ then the test ideal immediately preceding $\tau(f^\lambda)$ is $\tau(f^{\lambda'}).$ \end{lmm}

\begin{proof} If $\lambda$ is not a $F$-jumping exponent, then there exists a real number $\epsilon>0$ such that $\tau(f^{\lambda-\epsilon})=\tau(f^{\lambda}).$ Since the filtration by test ideals is decreasing, $\tau(f^{\lambda^-}):= \cap_{\mu<\lambda}\tau(f^{\mu})=\tau(f^\lambda).$ 

If $\lambda$ is a $F$-jumping exponent, then for all real numbers $r$ in $[\lambda',\lambda), \tau(f^r)=\tau(f^{\lambda'}).$ Thus $\tau(f^{\lambda^-})=\tau(f^{\lambda'}).$\qedhere
\end{proof}

\begin{cor} \label{cor:associated graded test}
Let $\{\lambda_1< \dots< \lambda_n\}$ be the $F$-jumping exponents of $f$ in the interval $(0,1].$ For all integers $i$ in $\{1, \dots, n\},$ the test ideal immediately preceding $\tau(f^{\lambda_i})$ is $\tau(f^{\lambda_{i-1}}),$ where we have set $\lambda_0=0.$ Thus the non-zero summands of the associated graded to the test ideal filtration of $R$ with respect to $f$ are indexed by the F-jumping exponents of f in the interval $(0,1]$ and  
$$\mathfrak{gr}_{\lambda \in (0,1]}(\tau(f^{\lambda}))= \bigoplus_{i=1}^{i=n}\tau(f^{\lambda_{i-1}})/\tau(f^{\lambda_i}).$$ \end{cor}

\begin{proof} It follows directly from Lemma \ref{lmm:associated graded test}\qedhere\end{proof}

\subsection{$N_f^{(l)}$ and test ideals} \label{subsection: N_f test}

By explicit computations, Musta{\c{t}}{\u{a}} observed the following.

\begin{prop} \label{prop:Df} 
Let $k$ be a perfect field of characteristic $p>0,R$ a smooth $k$-algebra and let $f$ be an element of $R.$ 
Let $l$ be a natural number. 

\begin{enumerate}
\item \label{item: M_f}
There is a natural isomorphism of left $D_R^{(l)}$-modules,
$$M_f^{(l)}\cong \bigoplus_{0\leq n <p^{l+1}}D_R^{(l)}f^n,$$ where for all elements $r$ of $R, D_R^{(l)}r$ is the left $D_R^{(l)}$-submodule of $R$ generated by $r.$
It induces an isomorphism
$N_f^{(l)}\cong \bigoplus_{0\leq n <p^{l+1}}D_R^{(l)}f^n/D_R^{(l)}f^{n+1}.$

\item \label{item: Euler}
Let $n'$ be a natural number $<p^{l+1}$ and let $n'=\sum_{0\leq e\leq l}a_ep^e$ be its base $p$ expansion. The action of the higher Euler operators on $N_f^{(l)}$ transported by the isomorphism of (\ref{item: M_f}) to an action on $\bigoplus_{0\leq n <p^{l+1}}D_R^{(l)}f^n/D_R^{(l)}f^{n+1}$ stabilises $D_R^{(l)}f^{n'}/D_R^{(l)}f^{n'+1}$ 
and for all $e$ in $\{0,\dots, l\},$ $\nu_e$ acts on $D_R^{(l)}f^{n'}/D_R^{(l)}f^{n'+1}$ by $-a_e.$

\item \label{item: maps}
The natural inclusion $M_f^{(l)} \subset M_f^{(l+1)}$ is transported by the isomorphisms of level $l$ and $l+1$ of (\ref{item: M_f}) to $$\bigoplus_{0\leq n <p^{l+1}}D_R^{(l)}f^n \to \bigoplus_{0\leq m <p^{l+2}}D_R^{(l+1)}f^m$$
$$\sum_{0\leq n <p^{l+1}}g_n \mapsto \sum_{0\leq n <p^{l+1}}\sum_{0\leq j<p}(-1)^j {{p-1} \choose j} (g_nf^{jp^{l+1}})_{n+jp^{l+1}},$$ where the subscript of an element indicates the direct summand to which it belongs.

\end{enumerate}
\end{prop}

\begin{proof} (\ref{item: M_f}) The first isomorphism is shown in \cite[Proposition 6.1]{MR2469353} and the second in \cite[Corollary 6.5]{MR2469353}. 

(\ref{item: Euler}) See \cite[Corollary 6.5]{MR2469353}. 

(\ref{item: maps}) It is \cite[Remark 5.7]{MR2469353}.\qedhere\end{proof}

Recall that we want to express a preimage of $N_f^{(l)}$ under the Frobenius pull-back $(F^{l+1})^*.$ We start with $D_R^{(l)}f^n.$

\begin{lmm} \label{lmm:Df test} There is a canonical isomorphism of left $D_R^{(l)}$-modules coming from an elementary equality of ideals, $$D_R^{(l)}f^n\cong (F^{l+1})^*\tau(f^{\frac{n}{p^{l+1}}}).$$\end{lmm}

\begin{proof} 
By \cite[Lemma 2.2]{MR2538604}, the left $D_R^{(l)}$-submodule of $R$ generated by $f^n$ is $$((f^n)^{[\frac{1}{p^{l+1}}]})^{[p^{l+1}]}.$$
The latter is none other than $\tau(f^{\frac{n}{p^{l+1}}})^{[p^{l+1}]}\cong (F^{l+1})^*\tau(f^{\frac{n}{p^{l+1}}}),$ by \cite[Lemma 2.1]{MR2538604}.\qedhere\end{proof}

\begin{defi} Let $l$ be a natural number. We say that $l$ \emph{separates the $F$-jumping exponents of $f$} if the partition of $(0,1]$ in intervals of length $\frac{1}{p^{l+1}}$ separates the $F$-jumping exponents of $f,$ i.e. for each natural number $n< p^{l+1},$ the interval $(\frac{n}{p^{l+1}},\frac{n+1}{p^{l+1}}]$ contains at most one $F$-jumping exponent of $f.$\end{defi}

\begin{prop} \label{prop:Frob pull-back} Let $l$ be a natural number. Recall that by (\ref{thm:F-jumps discrete}) of Theorem \ref{thm:F-jumps}, there are only finitely many F-jumping exponents of $f$ in the unit interval $[0,1]$ and suppose that $l$ is large enough to separate the $F$-jumping exponents of $f.$ Then the canonical isomorphism of Lemma \ref{lmm:Df test} induces an isomorphism of left $D_X^{(l)}$-modules, $$N_f^{(l)}\cong (F^{l+1})^*(\mathfrak{gr}_{\lambda \in (0,1]}(\tau(f^{\lambda}))).$$
Hence for every natural number $l'$ greater than $l,$ the map induced from that in (\ref{item: maps}) of Proposition \ref{prop:Df} transports to a morphism $$(F^{l'+1})^*(\mathfrak{gr}_{\lambda \in (0,1]}(\tau(f^{\lambda}))) \overset{a_{l'+1}}{\to} (F^{l'+2})^*(\mathfrak{gr}_{\lambda \in (0,1]}(\tau(f^{\lambda})))$$ and there is an isomorphism of left $D_R$-modules
$$N_f=\varinjlim_{l'\geq l}N_f^{(l')}\cong \varinjlim_{l'\geq l}(F^{l'+1})^*(\mathfrak{gr}_{\lambda \in (0,1]}(\tau(f^{\lambda}))),$$ where $\varinjlim_{l'\geq l}(F^{l'+1})^*(\mathfrak{gr}_{\lambda \in (0,1]}(\tau(f^{\lambda})))$ denotes the direct limit with respect to these morphisms. 
\end{prop}

\begin{proof} By (\ref{item: M_f}) of Proposition \ref{prop:Df} and Lemma \ref{lmm:Df test}, for all natural numbers $l,$ we have that $$N_f^{(l)}\cong \bigoplus_{0\leq n <p^{l+1}}(F^{l+1})^*\tau(f^{\frac{n}{p^{l+1}}})/(F^{l+1})^*\tau(f^{\frac{n+1}{p^{l+1}}})$$ and thus that 
$$N_f^{(l)}\cong (F^{l+1})^*(\bigoplus_{0\leq n <p^{l+1}}\tau(f^{\frac{n}{p^{l+1}}})/\tau(f^{\frac{n+1}{p^{l+1}}})),$$ by flatness of the Frobenius pull-back.

Let $n$ be a natural number $<p^{l+1}.$ If there is no $F$-jumping exponent of $f$ in the interval $(\frac{n}{p^{l+1}},\frac{n+1}{p^{l+1}}],$ then $\tau(f^{\frac{n}{p^{l+1}}})=\tau(f^{\frac{n+1}{p^{l+1}}}).$ Thus the quotient $\tau(f^{\frac{n}{p^{l+1}}})/\tau(f^{\frac{n+1}{p^{l+1}}})$ vanishes.
On the other hand, if there is an $F$-jumping exponent of $f$ in the interval $(\frac{n}{p^{l+1}},\frac{n+1}{p^{l+1}}],$ then the quotient $\tau(f^{\frac{n}{p^{l+1}}})/\tau(f^{\frac{n+1}{p^{l+1}}})$ is equal to $\tau(f^{\lambda^-})/\tau(f^\lambda),$ where $\lambda$ is the $F$-jumping coefficient in $(\frac{n}{p^{l+1}},\frac{n+1}{p^{l+1}}],$ unique by the hypothesis on $l.$

Finally, since, for varying $n,$ the intervals $(\frac{n}{p^{l+1}},\frac{n+1}{p^{l+1}}]$ form a partition of $(0,1],$ all $F$-jumping coefficients of $f$ in $(0,1]$ appear in this way. Thus one has that $$\bigoplus_{0\leq n <p^{l+1}}\tau(f^{\frac{n}{p^{l+1}}})/\tau(f^{\frac{n+1}{p^{l+1}}}))= \bigoplus_\lambda \tau(f^{\lambda^-})/\tau(f^\lambda),$$ where the direct sum on the right-hand side is over the $F$-jumping exponents of $f$ in the interval $(0,1].$ 
One applies Corollary \ref{cor:associated graded test} to conclude the proof of the proposition.\qedhere 
\end{proof}

In the next section,  we use the relation between $N_f$ and the $F$-jumping exponents of $f$ to get information about the roots of its $b$-function.

\section{Relation to test ideals and rationality of the roots} \label{section:test ideals}

\subsection{Preliminaries on $p$-adic and $\frac{1}{p}$-adic expansions}

We will express the $p$-adic expansions of the roots of the $b$-function of $f$ in terms of the ``$\frac{1}{p}$-adic expansions" of its $F$-jumping exponents. Let us precise what we mean. 
  
\begin{defi} Let $p$ be a prime number and let $r$ be a positive real number. The \emph{$\frac{1}{p}$-adic expansion} of the number $r$ is its unique base $p$ expansion which does not have infinitely many consecutive zero coefficients. It is written $r=\sum_{-b\leq n} r_n(\frac{1}{p})^n,$ where the coefficients $r_n$ belong to the set $\{0, \dots, p-1\}$ and $b$ is a natural number. The \emph{$\frac{1}{p}$-adic fractional part} of $r$ is $frac_{\frac{1}{p}}(r):= \sum_{n\geq 1} r_n(\frac{1}{p})^n$ and its \emph{$\frac{1}{p}$-adic integer part} is $[r]_{\frac{1}{p}}:= \sum_{n=-b}^{n=0} r_n(\frac{1}{p})^n.$
\end{defi}

\begin{defi} A positive real number $r$ of $\frac{1}{p}$-adic expansion $r=\sum_{-b\leq n} r_n(\frac{1}{p})^n$ is \emph{$\frac{1}{p}$-adically periodic} if there is a natural number $n'\geq1$ such that the sequence $(r_n)_{n\geq n'}$ is repeating. Let $n''$ be the smallest such natural number. The \emph{$\frac{1}{p}$-adic anteperiod} of $r$ is $n''-1.$ The \emph{$\frac{1}{p}$-adic period} of $r$ is the period $\rho_r$ of the sequence $(r_n)_{n\geq n''}$ and the word $r_{n''}\dots r_{n''+\rho_r-1}$ is its $\frac{1}{p}$-adic repetend.
A $\frac{1}{p}$-adically periodic number is said to be \emph{purely $\frac{1}{p}$-adically periodic} if its $\frac{1}{p}$-adic anteperiod is $0.$

\end{defi}

The following lemma characterises the periodicity properties of $\frac{1}{p}$-adic expansions.
 
\begin{lmm} \label{lmm:periodic expansion} Let $p$ be a prime number.

\begin{enumerate}
\item \label{item:periodic} A positive real number is $\frac{1}{p}$-adically periodic if and only if it is rational.
\item \label{item:purely periodic} A positive real number is purely $\frac{1}{p}$-adically periodic if and only if it is in $\mathbb{Z}_{(p)}.$
\end{enumerate}
\end{lmm}

\begin{proof} (\ref{item:periodic}) To prove that $\frac{1}{p}$-adic periodicity implies rationality, one notices that for all $e\geq1, \sum_{n>0}\frac{1}{p^{en}}= \frac{1}{p^e-1}.$ The rest of the proof is very standard and left to the reader.

(\ref{item:purely periodic}) The assertion is invariant under shifts by natural numbers. Hence it suffices to prove it for positive real numbers in the interval $(0,1].$ 

If a number $r \in (0, 1]$ is purely $\frac{1}{p}$-adically periodic, then its $\frac{1}{p}$-adic expansion is of the form $r_0\sum_{n>0}\frac{1}{p^{en}}= \frac{r_0}{p^e-1},$ where $e$ is a natural number $>1$ and $r_0$ is smaller than $p^e.$ Thus $r$ is in $\mathbb{Z}_{(p)}.$

Suppose that $r$ is in $\mathbb{Z}_{(p)}\cap (0, 1]$ and let $r=\frac{a}{b}$ be its reduced rational expression. Then it is well-known that, since $b$ is prime to $p,$ there is a positive natural number $e$ such that $b$ divides $p^e-1.$ Thus $r= \frac{a'}{p^e-1}=a'\sum_{n>0}\frac{1}{p^{en}},$ with $0<a'<p^e.$ Hence $r$ is purely $\frac{1}{p}$-adically periodic.\qedhere\end{proof}

To each $\frac{1}{p}$-adically periodic number  
correspond finitely many ``conjugated" numbers. 

\begin{defi} The \emph{$\frac{1}{p}$-adic conjugates} of  a $\frac{1}{p}$-adically periodic number $r$ are the numbers whose $\frac{1}{p}$-adic expansion is obtained from that of $r$ by a cyclic permutation of the letters of its $\frac{1}{p}$-adic repetend.

\end{defi}

Thus for $l$ the anteperiod of $r$ and $d$ its period, the $\frac{1}{p}$-adic conjugates of $r= \frac{b_1}{p}+\dots+\frac{b_l}{p^l}+\frac{a_1}{p^{l+1}}+ \frac{a_2}{p^{l+2}}+ \dots+ \frac{a_{d-1}}{p^{l+d-1}}+ \frac{a_d}{p^{l+d}}+\frac{a_1}{p^{l+d+1}}+\dots$ are 

$$\{\frac{b_1}{p}+\dots+\frac{b_l}{p^l}+\frac{a_1}{p^{l+1}}+ \frac{a_2}{p^{l+2}}+ \dots+ \frac{a_{d-1}}{p^{l+d-1}}+ \frac{a_d}{p^{l+d}}+\frac{a_1}{p^{l+d+1}}+\dots,$$ 
$$\frac{b_1}{p}+\dots+\frac{b_l}{p^l}+\frac{a_d}{p^{l+1}}+ \frac{a_1}{p^{l+2}}+ \dots+ \frac{a_{d-2}}{p^{l+d-2}}+ \frac{a_{d-1}}{p^{l+d}}+\frac{a_d}{p^{l+d+1}}+\dots, \dots, $$$$
\frac{b_1}{p}+\dots+\frac{b_l}{p^l}+\frac{a_2}{p^{l+1}}+ \frac{a_3}{p^{l+2}}+ \dots+ \frac{a_{d}}{p^{l+d-1}}+ \frac{a_1}{p^{l+d}}+\frac{a_2}{p^{l+d+1}}+\dots\}.$$

We now present a pure periodicity lemma which will be useful in our description of the roots of the $b$-function. We start with a definition.

\begin{defi} Let $n$ be a positive natural number and let $a_1, \dots, a_n$ 
be elements of $\{0, \dots, p-1\}.$  
Set $a$ to be the rational number $a:=\sum_{i=1}^{i=n}\frac{a_i}{p^i}.$ A positive real number $\lambda$ \emph{begins with $a$} if its $\frac{1}{p}$-adic fractional part begins with $a,$ i.e. if there are coefficients $\lambda_j$ for all j greater than n such that 
$$frac_{\frac{1}{p}}(\lambda)= \sum_{i=1}^{i=n}\frac{a_i}{p^i} + \sum_{j>n}\frac{\lambda_j}{p^j}.$$  
\end{defi}

\begin{lmm} \label{lmm:infinity}

Let $s$ be a sequence $s: \mathbb{N}_0 \to \{0, \dots, p-1\}, n\mapsto s(n)$
and let $\Lambda$  
be a finite set of rational numbers. For all positive natural numbers $l,$ set $s_l:= \sum_{i=1}^{i=l}\frac{s(l-i+1)}{p^i} .$ 

Suppose that for all natural numbers $N$ large enough, there is an element $\lambda$ of $\Lambda$ beginning with $s_N.$ Then there is a number $L$ such that for all $n$ greater than $L,$ there is a purely $\frac{1}{p}$-adically periodic element of $\Lambda$ beginning with $s_n.$ 
\end{lmm}

\begin{proof} Since $\Lambda$ is a finite set, there is a number $L$ greater than $N$ such that for each $l$ greater than $L,$ there is an element $\lambda$ in $\Lambda$ beginning with $s_l$ and beginning with $s_{l'}$ for infinitely many $l'$ greater than $l.$ 
Let us prove that such a rational number $\lambda$ has to be purely $\frac{1}{p}$-adically periodic. 

Clearly, one may suppose that $\lambda$ coincides with its $\frac{1}{p}$-adic fractional part. Let $\lambda=\sum_{j\geq1}\frac{\lambda_j}{p^j}$ be its $\frac{1}{p}$-adic expansion.
By (\ref{item:periodic}) of Lemma \ref{lmm:periodic expansion}, $\lambda$ is $\frac{1}{p}$-adically periodic. Let $n$ be its anteperiod and $\rho>0$ be its period. We claim that for all numbers $j\geq1, \lambda_{j+\rho}=\lambda_j.$ This indeed implies that $\lambda$ is purely $\frac{1}{p}$-adically periodic. We will show that the hypothesis on the beginning of $\lambda$ allows one to embed the beginning of its $\frac{1}{p}$-adic expansion into its repeating part, thus forcing pure periodicity.

More precisely, since $\lambda_{j+\rho}=\lambda_j$ for all $j>n$ by definition of the anteperiod $n,$ it is enough to show that $\lambda_{j+\rho}=\lambda_j$ for all $1\leq j\leq n.$ Consider $l$ greater than $max\{n+\rho,L\}$ such that $\lambda$ begins by $s_l.$ Thus for all $1\leq i\leq n+\rho, \lambda_i$ is equal to $s(l-i+1).$ Since there is also $l'$ greater than $l+n$ such that $\lambda$ begins with $s_{l'},$ we have that  for all $1\leq i\leq n,$ $$\lambda_i=s(l-i+1)= s(l'-(l'-l+i)+1)=\lambda_{l'-l+i}.$$ Moreover $l'-l$ is not smaller than the anteperiod $n,$ thus $l'-l+i>n.$ Hence $\lambda_{l'-l+i}$ is equal to $\lambda_{l'-l+i+\rho}.$ Using again that $\lambda$ begins with $s_{l'},$ we get that the latter $\lambda_{l'-l+i+\rho}=s(l'-(l'-l+i+\rho)+1)= s(l-(i+\rho)+1).$ Recall that $\lambda$ begins also with $s_l$ and thus that $s(l-(i+\rho)+1)=\lambda_{i+\rho}.$ In conclusion, we have proved that for each $i$ between $1$ and $n,$ $$\lambda_i=\lambda_{l'-l+i}=\lambda_{l'-l+i+\rho}=\lambda_{i+\rho}.$$ This completes the proof of the lemma.\qedhere
\end{proof}

Finally, let us express the $p$-adic expansion of a number in $\mathbb{Z}_{(p)}$ in terms of the $\frac{1}{p}$-adic expansion of its opposite. Note that a positive real number is in the interval $(0,1]$ if and only if it equals its $\frac{1}{p}$-adic fractional part. We have the following.

\begin{lmm} \label{lmm:expansion of opposite} Let $r$ be a positive real number in the interval $(0,1].$ Suppose that $r$ is purely $\frac{1}{p}$-adically periodic and let $r=\sum_{n\geq 1} \frac{r_n}{p^n}$ be its $\frac{1}{p}$-adic expansion.  
Let $d$ be its $\frac{1}{p}$-adic period. Then the $p$-adic expansion of $-r$ is $$-r=\sum_{n\geq 0} r_{d-n_d}p^{n},$$ where $n_d$ is the representative of $n\text{ mod }d$ in $\{0,\dots, d-1\}.$ 

\end{lmm}

\begin{proof} Let $d$ be the $\frac{1}{p}$-adic period of $r$ and let $r':= \sum_{i=1}^{i=d} r_i p^{d -i}.$ By pure periodicity, we have that $r= r'\sum_{e\geq 1} \frac{1}{p^{d e}}.$ Moreover $\sum_{e\geq 1} \frac{1}{p^{d e}}=\frac{1}{p^d-1}$ and the $p$-adic expansion of $-\frac{1}{p^d-1}$ is $\sum_{j\geq0}p^{dj}.$ Thus the $p$-adic expansion of $-r$ is equal to $$-r= r'\sum_{j\geq0}p^{dj}= \sum_{1\leq i\leq d; j\geq0} r_i p^{dj+d-i}= \sum_{n\geq 0} r_{d-n_d}p^{n}.$$\qedhere 

\end{proof}

\subsection{Unit $F$-modules} In the next subsection, we will show that $N_f$ seen as a left $D_R$-module is of a very particular type, namely it is a unit $F$-module. Let us first recall their definition and basic properties. 

From now on, unless otherwise mentioned, $k$ is a perfect field of characteristic $p>0, R$ is a smooth $k$-algebra, $f$ is an element of $R$ and $F$ is the Frobenius endomorphism of $R.$ 

\begin{defi}
Let $M$ be an $R$-module and $e$ a natural number. Let $R \overset{\mu_M}{\to} End_k(M)$ be the structure map. The \emph{$e$-th Frobenius direct image of $M$} is the $R$-module $F^e_*M$ whose underlying $k$-vector space is $M,$ with structure map $\mu_M\circ F^e: R \overset{F^e}{\to} R \overset{\mu_M}{\to} End_k(M).$
\end{defi}

\begin{defi} Let $M$ be an $R$-module and $e$ be a positive integer. An \emph{$F^e$-module} 
structure on $M$ is an $R$-linear morphism $F^e_M: M \to F^e_*M.$ If we do not want to specify $e,$ we call an $F^e$-module a \emph{Frobenius module} or \emph{$F$-module}. 
\end{defi}

\begin{defi} Let $e$ be a positive integer. The ring $R[F^e]$ is the ring generated by $R$ and an element $F^e,$ with the relations $F^er=r^{p^e}F^e,$ for all elements $r$ of $R.$ The data of an $F^e$-module structure on an $R$-module is clearly equivalent to that of a left $R[F^e]$-module structure, compatible with the action of $R.$ We say that an $F^e$-module is \emph{finitely generated} if it is so as a left $R[F^e]$-module. 
\end{defi}

\begin{defi} Let $e$ be a positive integer. An $F^e$-module $M$ is \emph{unit} if the adjoint map to the structure map $F^e_M,$ $$(F^e)^*M\overset{{F^e_M}'}{\to} M$$ is an isomorphism.  
\end{defi}

The following is well-known.

\begin{prop} Let $e$ be a positive integer. A unit $F^e$-module is canonically endowed with a structure of left $D_R$-module. 
\end{prop}

\begin{proof} Denote the unit $F^e$-module by $M.$ For all natural numbers $n,$ we have that $\gamma_n:= (F^{ne})^*({F^e_M}') \circ \dots \circ (F^e)^*({F^e_M}') \circ {F^e_M}'$ is an $R$-linear isomorphism $(F^{(n+1)e})^*M \overset{\gamma_n}{\to} M.$ As noted in Subsection \ref{subsection: bounded level}, $(F^{(n+1)e})^*M$ is endowed with a canonical left $D_R^{((n+1)e-1)}$-module structure. Hence so is $M,$ via the isomorphism $\gamma_n.$ Moreover, it is straightforward to check that those actions are compatible with the inclusions $D_R^{((n+1)e-1)} \subset D_R^{((n+2)e-1)}.$ Since $D_R= \bigcup_{n\geq0} D_R^{((n+1)e-1)}, M$ is thus canonically a left $D_R$-module.\qedhere
\end{proof}

There is a convenient way to generate unit $F$-modules, due to Lyubeznik (\cite{MR1476089}). 
\begin{prop} Let $e$ be a positive integer. Let $G$ be an $R$-module and $\beta$ be an $R$-linear morphism $G\overset{\beta}{\to} (F^e)^*G.$ Consider the direct limit of the direct system
$$G \overset{\beta}{\to} (F^e)^*G \overset{(F^e)^*\beta}{\to} (F^{2e})^*G  \overset{(F^{2e})^*\beta}{\to} \dots$$ 
and denote it $\varinjlim \beta.$ 
Then the $R$-module $\varinjlim \beta$ is naturally isomorphic to its pull-back $(F^e)^*\varinjlim \beta.$ Hence it is canonically endowed with a structure of unit $F^e$-module.
\end{prop}

\begin{proof} This is a direct consequence of the commutation of pull-back with direct limits.\qedhere
\end{proof}

\begin{defi} Let $e$ be a positive integer and $M$ a unit $F^e$-module. The data of a finitely generated $R$-module $G,$ an $R$-linear morphism $G\overset{\beta}{\to} (F^e)^*G$ and an isomorphism $\iota$ of Frobenius modules $\varinjlim \beta \to M$ is called a \emph{generator} of $M.$ We will often omit to mention the isomorphism $\iota.$
\end{defi}

In fact, all finitely generated unit $F$-modules appear this way, as noted by Emerton-Kisin. 

\begin{thm} Every finitely generated unit $F^e$-module has a generator.
\end{thm}

\begin{proof} This is a special case of \cite[Theorem 6.1.3]{MR2071510}.\qedhere\end{proof}

Here is a fundamental finiteness property of unit $F$-modules by which they stand out among $D_R$-modules. It is due to Lyubeznik.

\begin{thm} A finitely generated unit $F$-module is of finite length as a left $D_R$-module.
\end{thm}

\begin{proof} It follows immediately from \cite[Theorem 3.2]{MR1476089} and \cite[Theorem 5.6]{MR1476089}.\qedhere\end{proof}

\subsection{A unit $F$-structure on $N_f$} Here we continue the study of $N_f$ via its bounded level versions started in \ref{subsection: N_f test}.  
In particular, we show that the left $D_R$-module $N_f$ is a unit $F$-module and the higher Euler operators are compatible with its Frobenius endomorphism. 

\begin{prop} \label{prop:F-jumpDf}  Let $l$ be a natural number and $\lambda$ a $F$-jumping exponent of $f$ in $(0,1].$ Suppose that $l$ is large enough to separate the $F$-jumping exponents of $f.$ Then the composition of the isomorphisms of Propositions \ref{prop:Df}.\ref{item: M_f} and \ref{prop:Frob pull-back}, $$\bigoplus_{0\leq n <p^{l+1}}D_R^{(l)}f^n/D_R^{(l)}f^{n+1} \cong N_f^{(l)}\cong (F^{l+1})^*(\mathfrak{gr}_{\lambda \in (0,1]}(\tau(f^{\lambda})))$$ induces an isomorphism  $$D_R^{(l)}f^m/D_R^{(l)}f^{m+1} \cong (F^{l+1})^*(\tau(f^{\lambda^{-}})/\tau(f^{\lambda})),$$ for the unique $m$ such that $\frac{m}{p^{l+1}}$ is the truncated $\frac{1}{p}$-adic expansion of $\lambda.$

\end{prop}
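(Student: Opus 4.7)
The plan is to trace through the two isomorphisms summand by summand. The first isomorphism (Proposition \ref{prop:Df}) decomposes $N_f^{(l)}$ as a direct sum indexed by $n \in \{0, 1, \dots, p^{l+1}-1\}$, with the $n$-th summand $D_X^{(l)}f^n/D_X^{(l)}f^{n+1}$. Applying the identification $D_X^{(l)}f^n \cong (F^{l+1})^*\tau(f^{n/p^{l+1}})$ that underlies the proof of Proposition \ref{prop:Frob pullback} sends this summand to $(F^{l+1})^*\bigl(\tau(f^{n/p^{l+1}})/\tau(f^{(n+1)/p^{l+1}})\bigr)$. So the statement reduces to showing that when $\tfrac{m}{p^{l+1}}$ is the truncated $\tfrac{1}{p}$-adic expansion of $\lambda$, this particular quotient equals $\tau(f^{\lambda-\epsilon})/\tau(f^{\lambda})$.

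The first key step is the combinatorial observation that the truncation places $\lambda$ in the half-open interval $\bigl(\tfrac{m}{p^{l+1}}, \tfrac{m+1}{p^{l+1}}\bigr]$. Writing $\lambda = \sum_{k \geq 1} \lambda_k/p^k$, the remainder $\lambda - \tfrac{m}{p^{l+1}} = \sum_{k > l+1} \lambda_k/p^k$ is strictly positive, because by the definition of the $\tfrac{1}{p}$-adic expansion there is no infinite tail of zeros, and it is plainly at most $1/p^{l+1}$. So the hypothesis on $l$ forces $\lambda$ to be the unique $F$-jumping exponent inside this interval.

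The second key step uses the right-continuity of the test-ideal filtration. Since $\lambda$ is the only $F$-jumping exponent in $\bigl(\tfrac{m}{p^{l+1}}, \tfrac{m+1}{p^{l+1}}\bigr]$, we have $\tau(f^\alpha) = \tau(f^{m/p^{l+1}})$ for all $\alpha \in [m/p^{l+1}, \lambda)$ (by right-continuity at $m/p^{l+1}$ and the absence of jumps in between), so $\tau(f^{m/p^{l+1}}) = \tau(f^{\lambda-\epsilon})$ for $\epsilon > 0$ small; similarly $\tau(f^{(m+1)/p^{l+1}}) = \tau(f^\lambda)$. Substituting these two equalities into the quotient yields the desired identification.

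There is no serious obstacle once the conventions are pinned down; the content is bookkeeping that aligns the indexings of Propositions \ref{prop:Df} and \ref{prop:Frob pullback}. The most delicate point is the strict lower bound $\tfrac{m}{p^{l+1}} < \lambda$, which depends on the no-infinite-zero convention defining the $\tfrac{1}{p}$-adic expansion; this is what guarantees that $\lambda$ lies strictly \emph{above} its truncation and hence that the left-endpoint value of $\tau$ reproduces $\tau(f^{\lambda-\epsilon})$ rather than $\tau(f^{\lambda})$.
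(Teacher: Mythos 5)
Your proof is correct and takes essentially the same route as the paper, whose entire argument is the one-line observation that $\lambda \in (\tfrac{n}{p^{l+1}},\tfrac{n+1}{p^{l+1}}]$ if and only if the $\tfrac{1}{p}$-adic expansion of $\lambda$ starts with $\tfrac{n}{p^{l+1}}$; this is exactly your first key step. Your second step (the right-continuity bookkeeping that identifies $\tau(f^{m/p^{l+1}})$ with $\tau(f^{\lambda-\epsilon})$ and $\tau(f^{(m+1)/p^{l+1}})$ with $\tau(f^{\lambda})$) is what the paper implicitly folds into Proposition \ref{prop:Frob pullback}, so you have simply made the same argument more explicit.
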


\begin{proof} It is clear that $\lambda \in (\frac{n}{p^{l+1}},\frac{n+1}{p^{l+1}}]$ if and only if the $\frac{1}{p}$-adic expansion of $\lambda$ begins with $\frac{n}{p^{l+1}}.$ The proposition follows then easily from the definition of the isomorphism of Proposition \ref{prop:Frob pull-back}.\qedhere\end{proof}

Let us now study the structure maps of the inductive system for $N_f$ from Proposition \ref{prop:Frob pull-back} in more details. 
\begin{prop} \label{prop:structure} 
There are natural numbers $L$ and $N$ such that for all $F$-jumping exponents $\lambda$ of $f$ in $(0, 1]$ which are not in $\mathbb{Z}_{(p)},$ and for all $l>L$ and $e>N,$
the restriction to $(F^{l+1})^*(\tau(f^{\lambda^{-}})/\tau(f^{\lambda}))$ of the $e$-th iterate of the structure maps $$(F^{l+1})^*(\mathfrak{gr}_{\mu \in (0,1]}(\tau(f^{\mu}))) \to (F^{l+e+1})^*(\mathfrak{gr}_{\mu \in (0,1]}(\tau(f^{\mu})))$$ vanishes. 

\end{prop}

\begin{proof} We first take $L$ large enough so that $l$ separates the $F$-jumping exponents of $f.$ By Proposition \ref{prop:F-jumpDf} and the description of the structure maps in (\ref{item: maps}) of Proposition \ref{prop:Df}, for all positive $i,$ the $i$-th iterate of the structure maps restricts to $$(F^{l+1})^*(\tau(f^{\lambda^{-}})/\tau(f^{\lambda})) \to \bigoplus_{0\leq n< p^{l+i+1} \text{and } n = m \text{ mod }p^{l+1}} D_R^{(l+i)}f^n/D_R^{(l+i)}f^{n+1},$$ for the unique $m$ such that $\frac{m}{p^{l+1}}$ is the truncated $\frac{1}{p}$-adic expansion of $\lambda.$

We claim that there are numbers $N$  and $L$  
such that if $\lambda\not\in \mathbb{Z}_{(p)},$ then for all $i>N,$ and $l>L,$  the above map vanishes.  

Let us argue by contradiction and suppose that there are $l$ and $i,$ large at will, such that the above map does not vanish. In particular the target of the map is not trivial. Hence there is a number $n_i,$ congruent to $m$ modulo $p^{l+1},$ such that $\frac{n_i}{p^{l+i+1}}$ is the truncated $\frac{1}{p}$-adic expansion of an $F$-jumping exponent of $f.$ Thus $n_i= m + b_ip^{l+1},$ for a certain natural number $b_i=\Sigma_{j=0}^{j=i-1} (b_i)_{j+1}p^j$ and $$\frac{n_i}{p^{l+i+1}}= \frac{(b_i)_{i}}{p} +  \frac{(b_i)_{i-1}}{p^2}+ \dots+ \frac{(b_i)_1}{p^i}+\frac{m_1}{p^{i+1}}+\frac{m_2}{p^{i+2}}+\dots+\frac{m_{l+1}}{p^{i+l+1}},$$ where $\frac{m_1}{p}+\frac{m_2}{p^2}+\dots+\frac{m_{l+1}}{p^{l+1}}$ is the truncated $\frac{1}{p}$-adic expansion of $\lambda.$  
Since the map is an $i$-th iterate, for it not to vanish it is necessary that the corresponding $j$-th iterate does not vanish either, for all $j\leq i.$ Hence one may assume that as $i$ varies, the numbers $b_i$ are compatible with each others. Namely for all $i\leq i'$ such that $b_i$ and $b_{i'}$ are defined, one has that $b_{i'}$ is congruent to $b_i$ modulo $p^i.$ For all $1\leq j\leq i,$ let us denote $(b)_j$ the coefficient $(b_i)_j=(b_{i'})_j.$ 

For every $l$ as above, we define the sequence  
$s
:= (m_{l+1}, m_l, \dots, m_1, (b)_1, (b)_2, \dots).$ Let $\Lambda$ be the set of $F$-jumping exponents of $f$ in $(0,1].$ It is clear that for all $n$ large enough, there is an element of $\Lambda$ beginning with $s_n,$ using the notation of Lemma \ref{lmm:infinity}. Hence by Lemma \ref{lmm:infinity}, for $n$ large enough, there is a purely $\frac{1}{p}$-adically periodic element of $\Lambda$ beginning with $s_n.$ As $l$ can be taken as large as one wishes, this implies that the sequence $(m_1, m_2, \dots)$ is purely periodic, that is that $\lambda$ is purely $\frac{1}{p}$-adically periodic. This is absurd since $\lambda$ is in the complement of $\mathbb{Z}_{(p)},$ by hypothesis. We have thus shown the required vanishing.\qedhere
\end{proof}

This allows us to give an inductive system for the $D_R$-module $N_f$ expressed only in terms of the $F$-jumping exponents of $f$ in $(0, 1]\cap \mathbb{Z}_{(p)}.$  
Recall from Proposition \ref{prop:Frob pull-back} that for all $l$ large enough to separate the $F$-jumping exponents of $f,$ we have maps: $$(F^{l+1})^*(\mathfrak{gr}_{\lambda \in (0,1]}(\tau(f^{\lambda}))) \overset{a_{l+1}}{\to} (F^{l+2})^*(\mathfrak{gr}_{\lambda \in (0,1]}(\tau(f^{\lambda}))).$$ We set $$(F^{l+1})^*(\mathfrak{gr}_{\lambda \in (0,1] \cap \mathbb{Z}_{(p)}}(\tau(f^{\lambda}))) \overset{b_{l+1}}{\to} (F^{l+2})^*(\mathfrak{gr}_{\lambda \in (0,1] \cap \mathbb{Z}_{(p)}}(\tau(f^{\lambda})))$$ to be $b_{l+1}:= (F^{l+2})^*(\pi) \circ a_{l+1}\circ (F^{l+1})^*(i),$ where 
$i$ is the natural injection $$\mathfrak{gr}_{\lambda \in (0,1] \cap \mathbb{Z}_{(p)}}(\tau(f^{\lambda})) \overset{i}{\to} \mathfrak{gr}_{\lambda \in (0,1]}(\tau(f^{\lambda}))$$ and $\pi$ is the natural projection $\mathfrak{gr}_{\lambda \in (0,1]}(\tau(f^{\lambda})) \overset{\pi}{\to} \mathfrak{gr}_{\lambda \in (0,1] \cap \mathbb{Z}_{(p)}}(\tau(f^{\lambda})).$ We also have to consider the following intermediate system: $$(F^{l+1})^*(\mathfrak{gr}_{\lambda \in (0,1]}(\tau(f^{\lambda}))) \overset{c_{l+1}}{\to} (F^{l+2})^*(\mathfrak{gr}_{\lambda \in (0,1]}(\tau(f^{\lambda}))),$$ with $c_{l+1}:= (F^{l+2})^*(i) \circ (F^{l+2})^*(\pi)\circ a_{l+1}.$ We will denote the corresponding inductive systems by $(a_j), (b_j)$ and $(c_j),$ respectively.
  
\begin{cor} \label{cor: inductive system} On one hand, the injections $(F^j)^*(i)$ give rise to a morphism of inductive systems $(b_j) \to (c_j).$ Denote it $\bar{i}.$ On the other, the maps $$(F^{j})^*(\mathfrak{gr}_{\lambda \in (0,1]}(\tau(f^{\lambda}))) \overset{Id}{\to} (F^{j})^*(\mathfrak{gr}_{\lambda \in (0,1]}(\tau(f^{\lambda})))$$ and $$(F^{j+1})^*(\mathfrak{gr}_{\lambda \in (0,1]}(\tau(f^{\lambda}))) \overset{(F^{j+1})^*(i)\circ (F^{j+1})^*(\pi)}{\to} (F^{j+1})^*(\mathfrak{gr}_{\lambda \in (0,1]}(\tau(f^{\lambda})))$$ induce a morphism $(a_j) \to (c_j).$ Let us denote it $\bar{\pi}.$

The morphisms $\bar{i}$ and $\bar{\pi}$ induce isomorphisms of direct limits. In particular $N_f$ is isomorphic to the direct limit of $(b_j), \varinjlim(F^{j})^*(\mathfrak{gr}_{\lambda \in (0,1] \cap \mathbb{Z}_{(p)}}(\tau(f^{\lambda}))).$
\end{cor}

\begin{proof} It is a straightforward consequence of Proposition \ref{prop:structure}.\qedhere\end{proof}

We can now prove the
\begin{thm} \label{thm:structure N_f} Let $k$ be a perfect field of characteristic $p>0,R$ a smooth $k$-algebra and let $f$ be an element of $R.$
\begin{enumerate}

\item \label{item:unitF} Let $e$ be the lcm of the lengths of the periods of the $\frac{1}{p}$-adic expansions of the $F$-jumping exponents of $f$ in $(0,1] \cap \mathbb{Z}_{(p)}.$ 
The left $D_R$-module $N_f$ is a finitely generated unit $F^e$-module. 
 
\item \label{item:splitting}
For each $l$ separating the $F$-jumping exponents of $f,$ the unit $F^e$-module $N_f$ splits as a direct sum $$N_f=\bigoplus_{\lambda \in (0,1]\cap \mathbb{Z}_{(p)}} (N_f)_{\lambda},$$ where the $\lambda$ are the $F$-jumping exponents of $f$ in $(0,1]\cap \mathbb{Z}_{(p)}.$ Each summand is a direct limit $$(N_f)_{\lambda}:=\varinjlim_{j\geq0}(F^{l+1+ej})^*(\tau(f^{\lambda^{-}})/\tau(f^{\lambda}))$$ where the limit is for the maps induced by the system $(b_j)$ and it is non-trivial, $$(N_f)_{\lambda}\not=0.$$

\item \label{item:FrobEuler} 
The higher Euler operators act as endomorphisms of the unit $F^e$-module $N_f.$ 

\end{enumerate}

\end{thm}

\begin{proof} 
(\ref{item:unitF}) We use the inductive system $(b_j)$ of Corollary \ref{cor: inductive system}, which by that Corollary has $N_f$ as direct limit. Pick an $l$ that separates the $F$-jumping exponents of $f$ and denote $\beta_e$ the morphism $b_{l+e}\circ\dots\circ b_{l+2}\circ b_{l+1}:$
$$(F^{l+1})^*(\mathfrak{gr}_{\mu \in (0,1]\cap \mathbb{Z}_{(p)}}(\tau(f^{\mu}))) \overset{\beta_e}{\to} (F^e)^*(F^{l+1})^*(\mathfrak{gr}_{\mu \in (0,1]\cap\mathbb{Z}_{(p)}}(\tau(f^{\mu}))).$$ 
We claim that $\beta_e$ is a generator. It is enough to check that for all $r\geq 0,$ the pull-back $(F^{re})^*(\beta_e)$ coincides with the composition of the structure maps from $(b_j),$ that is  $(F^{re})^*(\beta_e)=b_{(r+1)e+l} \circ \dots\circ b_{re+l+1}.$ 
We see by the description of the structure maps from Proposition \ref{prop:Df}.\ref{item: maps}  and Proposition \ref{prop:F-jumpDf} that for all purely $\frac{1}{p}$-adically periodic $F$-jumping exponent $\lambda,$ both $\beta_{e, r}:= b_{(r+1)e+l} \circ \dots\circ b_{re+l+1}$ and $(F^{re})^*(\beta_e)$ send $$(F^{re+l+1})^*(\tau(f^{\lambda^{-}})/\tau(f^{\lambda})) \text{ to } 
(F^{(r+1)e+l+1})^*(\tau(f^{\lambda^{-}})/\tau(f^{\lambda})).$$ Indeed let $\frac{m}{p^{l+1}}$ be the truncated $\frac{1}{p}$-adic expansion of $\lambda.$ Since $e$ is a multiple of the periods, they both have to factor through the sum of the factors indexed by $F$-jumping exponents $\mu$ with the same truncated $\frac{1}{p}$-adic expansion $\frac{m}{p^{l+1}}.$ But $l$ separates the $F$-jumping exponents hence $\mu$ has to be equal to $\lambda.$ 

Moreover if $\frac{m}{p^{l+1}}+\frac{p^{l+1}n}{p^{e+l+1}}$ is the truncated $\frac{1}{p}$-adic expansion of $\lambda,$ we have by the description of the structure maps that $\beta_{e, r}$ is the multiplication by $cf^{p^{re+ l+1}n},$ for a non-zero element $c$ of $\mathbb{F}_p.$ We thus have that $\beta_{e, r}= (F^{re})^*(\beta_{e, 0}).$ Since $\beta_{e, 0}$ is equal to $\beta_e,$ this concludes the proof of the point.

(\ref{item:splitting}) The existence of the decomposition and the description of each summand as a direct limit follow directly from the description of $(b_j)$ in the proof of the point (\ref{item:unitF}) above. 

Let us prove  
the non-triviality of the summands, $(N_f)_{\lambda}\not=0.$ Fix an $F$-jumping exponent $\lambda$ of $f$ in $(0,1]\cap\mathbb{Z}_{(p)}.$ 
By Proposition \ref{prop:F-jumpDf} and the direct limit description of the summands,  
the vanishing $(N_f)_{\lambda}=0$ implies that the image under the structure map of $f^m \in D_R^{(l)}f^m$ in $D_R^{(l+je)}f^{m'}/D_R^{(l+je)}f^{m'+1}$ is zero, for some positive $j,$ and $m$ and $m'$ such that $\frac{m}{p^{l+1}}$ and $\frac{m'}{p^{l+je+1}}$ are the truncated $\frac{1}{p}$-adic expansions of $\lambda.$   
But by Proposition \ref{prop:Df}.\ref{item: maps}, 
the image of $f^m$ is a non-zero scalar times $f^{m'}.$ Thus its vanishing implies that of the quotient: $$0=D_R^{(l+je)}f^{m'}/D_R^{(l+je)}f^{m'+1}\cong (F^{l+je+1})^*(\tau(f^{\frac{m'}{p^{l+je+1}}})/\tau(f^{\frac{m'+1}{p^{l+je+1}}}))$$ and hence that of $$\tau(f^{\frac{m'}{p^{l+je+1}}})/\tau(f^{\frac{m'+1}{p^{l+je+1}}})=0.$$ Which by the definition of $m'$ is absurd since $\lambda$ is a $F$-jumping exponent of $f.$

(\ref{item:FrobEuler})
By (\ref{item:splitting}) 
it is enough to show the result for each of the unit $F^e$-modules $(N_f)_{\lambda}.$  
Moreover by definition of the isomorphisms in Corollary \ref{cor: inductive system}, the action of the higher Euler operators induced on the image of $(F^{l+1+ej})^*(\tau(f^{\lambda^{-}})/\tau(f^{\lambda}))$ in the direct limit of $(b_j)$ coincides with that induced by its image in the direct limit of $(a_j).$ It is thus given, via Proposition \ref{prop:F-jumpDf}, by Proposition \ref{prop:Df}.\ref{item: Euler}. 
That is the image of $(F^{l+1+ej})^*(\tau(f^{\lambda^{-}})/\tau(f^{\lambda}))$ is a common eigenspace of $\{\nu_0, \nu_1, \dots, \nu_{l+ej}\}$ of eigenvalue $-m_i$ for $\nu_i,$ where $\frac{\Sigma_{i=0}^{i=l+ej}m_ip^i}{p^{l+1+ej}}$ is the truncated $\frac{1}{p}$-adic expansion of $\lambda.$ Since $e$ is a multiple of the $\frac{1}{p}$-adic period of $\lambda,$ we have that if $m_{i+e}$ is defined, then $m_{i+e}=m_i.$ In particular for all numbers $i$ and $j'$ such that $0\leq i\leq l+j'e,$ the action of $\nu_i$ on the image of $(F^{l+1+ej'})^*(\tau(f^{\lambda^{-}})/\tau(f^{\lambda}))$ in $N_f$ is given by the same number $-m_i.$ The action of the higher Euler operators is thus compatible with the $F^e$-module structure.\qedhere
\end{proof}

\begin{rmk} \label{rmk:conjugates} The decomposition of $N_f$ as a direct sum $N_f=\bigoplus_{\lambda \in (0,1]\cap \mathbb{Z}_{(p)}} (N_f)_{\lambda}$ is canonical. However, the assignment of a $F$-jumping exponent to each of the summands is not. Indeed, different choices of $l$ in (\ref{item:splitting}) of Theorem \ref{thm:structure N_f} exchange the $(N_f)_{\lambda}$'s, for $\frac{1}{p}$-conjugated $\lambda$'s. This only depends on $l$ mod $e.$
\end{rmk}

We now briefly prove that the $b$-function has finitely many roots using the Riemann-Hilbert correspondence for unit $F$-modules (\cite{MR2071510}). This will be reproved in Theorem \ref{thm:periodic expansion of roots} which will provide more precise information about the roots. 

\begin{cor} \label{cor:finitely many roots} Let $k$ be a perfect field of characteristic $p>0,R$ a smooth $k$-algebra and let $f$ be an element of $R.$ 
The $b$-function of $f$ has finitely many roots.

\end{cor}

\begin{proof} We show that there are only finitely many maximal ideals of $$k[{s \choose p^0}, {s \choose p^1}, {s \choose p^2}, \cdots]$$ containing  the ideal $b_f \subset k[{s \choose p^0}, {s \choose p^1}, {s \choose p^2}, \cdots].$ It is an immediate consequence of Theorem \ref{thm:p-adic roots} that the maximal ideals of $k[{s \choose p^0}, {s \choose p^1}, {s \choose p^2}, \cdots]$ are defined over $\mathbb{F}_p.$ Thus it is enough to show that there are only finitely many maximal ideals of $\mathbb{F}_p[{s \choose p^0}, {s \choose p^1}, {s \choose p^2}, \cdots]$ containing $b_f\cap \mathbb{F}_p[{s \choose p^0}, {s \choose p^1}, {s \choose p^2}, \cdots].$ The latter is the annihilator of $N_f^{\gamma}$ in $\mathbb{F}_p[{s \choose p^0}, {s \choose p^1}, {s \choose p^2}, \cdots].$

By the Riemann-Hilbert correspondence for unit $F$-modules (\cite[Theorem 11.4.2]{MR2071510}), the unit $F^e$-module $N_f$ corresponds to an object in the constructible derived category of \'{e}tale $\mathbb{F}_p$-sheaves on $Spec(R).$ Since by (\ref{item:FrobEuler}) of Theorem \ref{thm:structure N_f}, the algebra $$\mathbb{F}_p[{s \choose p^0}, {s \choose p^1}, {s \choose p^2}, \cdots]$$ acts on $N_f$ by endomorphisms of unit $F^e$-module, it is transported by the Riemann-Hilbert correspondence to act on an object in the constructible derived category of \'{e}tale $\mathbb{F}_p$-sheaves on $Spec(R).$ The algebra of global endomorphims of those being finite dimensional over $\mathbb{F}_p,$ the algebra $\mathbb{F}_p[{s \choose p^0}, {s \choose p^1}, {s \choose p^2}, \cdots]/b_f$ is finite dimensional over $\mathbb{F}_p.$ It has thus finitely many maximal ideals, which proves the corollary.\qedhere
\end{proof}

\subsection{The roots of the $b$-function and $F$-jumping exponents}

Let us now describe the roots of the $b$-function of $f$ in terms of its $F$-jumping exponents.

\begin{thm} \label{thm:periodic expansion of roots} Let $k$ be a perfect field of characteristic $p>0,R$ a smooth $k$-algebra and let $f$ be an element of $R$ not contained in $k.1.$
The roots of the $b$-function of $f$ are the opposites of the $F$-jumping exponents of $f$ which are in $(0,1] \cap \mathbb{Z}_{(p)}.$
\end{thm}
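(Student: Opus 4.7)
The plan is to combine the decomposition of $N_f$ from Theorem~\ref{thm:structure N_f} with the eigenvalue description from Theorem~\ref{thm:FrobEuler}, and then to translate the resulting $\frac{1}{p}$-adic data into $p$-adic data using Lemma~\ref{lmm:expansion of opposite}.

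First, since $b_f$ is the annihilator of $N_f$ and the decomposition $N_f = \bigoplus_\lambda (N_f)_\lambda$ is a finite direct sum indexed by the $F$-jumping exponents $\lambda$ of $f$ in $(0,1] \cap \mathbb{Z}_{(p)}$, one has $b_f = \bigcap_\lambda \mathrm{Ann}\,(N_f)_\lambda$. By the standard fact that a prime ideal containing a finite intersection of ideals contains one of them, a maximal ideal $\mathfrak{m}_\alpha$ contains $b_f$ if and only if it contains some $\mathrm{Ann}\,(N_f)_\lambda$. By Theorem~\ref{thm:FrobEuler}, the nonzero summand $(N_f)_\lambda$ is a common eigenspace for the higher Euler operators, so its annihilator is a single maximal ideal $\mathfrak{m}_{\alpha_\lambda}$; explicitly, with the strictly periodic $\frac{1}{p}$-adic expansion $\lambda = \sum_{n \geq 1} a_n/p^n$ of period $d$, the eigenvalue formula of Theorem~\ref{thm:FrobEuler} gives $\alpha_\lambda = \sum_{e \geq 0} a_{d - (e \bmod d)}\, p^e \in \mathbb{Z}_p$.

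The next step is to identify $\alpha_\lambda$ with the opposite of a $\frac{1}{p}$-conjugate of $\lambda$. Lemma~\ref{lmm:expansion of opposite} gives the $p$-adic digits of $-\lambda$ as $a_{d - ((e+1) \bmod d)}$ in position $e$, while the digits of $\alpha_\lambda$ are $a_{d - (e \bmod d)}$ in position $e$; these are cyclic shifts of one another, and a direct comparison gives $\alpha_\lambda = -\lambda'$, where $\lambda'$ is the $\frac{1}{p}$-conjugate of $\lambda$ obtained by shifting the period $(a_1, \ldots, a_d)$ to $(a_2, \ldots, a_d, a_1)$.

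Finally, I invoke Remark~\ref{rmk:conjugates}: as the admissible parameter $l$ in Theorem~\ref{thm:structure N_f} varies, the labeling of the canonical summands of $N_f$ by $F$-jumping exponents permutes within each $\frac{1}{p}$-conjugacy class, which forces the set of $F$-jumping exponents of $f$ in $(0,1] \cap \mathbb{Z}_{(p)}$ to be a union of full $\frac{1}{p}$-conjugacy classes. Consequently the conjugate $\lambda'$ attached to each summand is itself $F$-jumping, and conversely every $F$-jumping exponent in $(0,1] \cap \mathbb{Z}_{(p)}$ arises as such a $\lambda'$ for some summand; the set of roots of $b_f$ is therefore precisely $\{-\lambda : \lambda$ is an $F$-jumping exponent of $f$ in $(0,1] \cap \mathbb{Z}_{(p)}\}$. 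The main obstacle I expect is this final bookkeeping, since the intrinsic annihilator of a summand naturally points to a specific $\frac{1}{p}$-conjugate of its label rather than to the label itself, so one must carefully use the conjugation-closure property implicit in Remark~\ref{rmk:conjugates} to match the canonical set of summand-annihilators with the full set of negatives of $F$-jumping exponents.
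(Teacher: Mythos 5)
Your proof is correct and follows essentially the same route as the paper's: write $b_f = \bigcap_\lambda \mathrm{Ann}\,(N_f)_\lambda$ via Theorem~\ref{thm:structure N_f}, identify each annihilator as a maximal ideal using the eigenvalue formula from Theorem~\ref{thm:FrobEuler}, and translate to opposites of $F$-jumping exponents via Lemma~\ref{lmm:expansion of opposite} together with the conjugacy-closure observation of Remark~\ref{rmk:conjugates}. Your explicit identification $\alpha_\lambda = -\lambda'$, with $\lambda'$ the one-step cyclic shift of $\lambda$'s period, is a useful spelling-out of the paper's statement that $\tilde\lambda_l$ is the opposite of a $\tfrac{1}{p}$-conjugate of $\lambda$.
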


\begin{proof} By Theorem \ref{thm:structure N_f}, $$b_f:=ann_{k[{s \choose p^0}, {s \choose p^1}, {s \choose p^2}, \cdots]}(N_f^{\gamma})=\cap_{\lambda \in (0,1]\cap \mathbb{Z}_{(p)}}ann_{k[{s \choose p^0}, {s \choose p^1}, {s \choose p^2}, \cdots]}(N_f)_{\lambda}^{\gamma},$$ where the $\lambda$ are the $F$-jumping exponents of $f$ in $(0,1]\cap \mathbb{Z}_{(p)}$ and the exponent $\gamma$ is a reminder that $k[{s \choose p^0}, {s \choose p^1}, {s \choose p^2}, \cdots]$ acts via the isomorphism $\gamma$ of Lemma \ref{lmm: gamma}.

Let $\lambda$ be an $F$-jumping exponent of $f$ in $(0,1] \cap \mathbb{Z}_{(p)}.$ Recall that the relation between the summand $(N_f)_{\lambda}$ of $N_f$ and the $F$-jumping exponent $\lambda$ depends on the choice of an integer $l$ separating the $F$-jumping exponents of $f.$ For all $l,$ we described the action of the higher Euler operators on $(N_f)_{\lambda}$ in the proof of Theorem \ref{thm:structure N_f}.\ref{item:FrobEuler}. Namely the action of ${s \choose p^i}$ on $(N_f)_{\lambda}$ is the multiplication by $m_i,$ where $\frac{\Sigma_{r=0}^{r=l+ej}m_rp^r}{p^{l+1+ej}}$ is the truncated $\frac{1}{p}$-adic expansion of $\lambda,$ independently of $e.$ Thus if the $\frac{1}{p}$-adic expansion of $\lambda$ is $\lambda=\Sigma_{n\geq1}\frac{\lambda_n}{p^n},$ the annihilator of $(N_f)_{\lambda}^\gamma$ is the maximal ideal corresponding by Theorem \ref{thm:p-adic roots} to the $p$-adic integer $\Sigma_{i\geq0}m_ip^i,$ with $m_i= \lambda_{i'}$ for $i'= l+1-i\text{ mod }e.$ The latter is well-defined since $m_{i+e}=m_i$ and $\lambda_{j+e}=\lambda_j,$ for all $i,j.$ 

Hence for a choice of $l$ such that $l+1$ is a multiple of $e,$ say $l+1=re,$ we have that the annihilator of $(N_f)_{\lambda}^\gamma$ is the maximal ideal corresponding to the $p$-adic integer $\Sigma_{i\geq0}\lambda_{i'}p^i,$ where $i'= re-i\text{ mod }e.$ Which by Lemma \ref{lmm:expansion of opposite} is equal to $\Sigma_{i\geq0}\lambda_{i'}p^i= -\lambda.$

Since we have in particular shown that the set of roots of the $b$-function of $f$ is the union for all $F$-jumping exponents $\mu$ of $f$ in $(0,1] \cap \mathbb{Z}_{(p)}$ of the $p$-adic integers corresponding to the annihilator of $(N_f)_{\mu}^\gamma,$ this concludes the proof of the theorem.\qedhere 

\end{proof}

\begin{rmk} Since there are only finitely many $F$-jumping exponents of $f$ in $(0,1]$ by Theorem \ref{thm:F-jumps}, Theorem \ref{thm:periodic expansion of roots} implies Corollary \ref{cor:finitely many roots}, as announced.
\end{rmk}

The following is a direct consequence of Theorem \ref{thm:periodic expansion of roots}. 

\begin{cor} \label{cor:negratroots} The roots of the $b$-function are negative rational numbers, $\geq -1.$
\end{cor}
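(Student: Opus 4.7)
The plan is to read the corollary off immediately from the preceding Theorem~\ref{thm:periodic expansion of roots}, invoking only the rationality half of Theorem~\ref{thm:F-jumps} to handle the denominators.

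First, I would apply Theorem~\ref{thm:periodic expansion of roots} to identify the roots of $b_f$ with the set $\{-\lambda : \lambda \text{ is an } F\text{-jumping exponent of } f \text{ and } \lambda \in (0,1]\cap \mathbb{Z}_{(p)}\}$. This immediately reduces the statement to two separate assertions about such $\lambda$: that each is rational, and that each lies in $(0,1]$.

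For rationality, I would invoke Theorem~\ref{thm:F-jumps}(2): the $F$-jumping exponents of $f$ lie in $\mathbb{Q}$, so in particular any $\lambda$ appearing above is rational, and hence so is $-\lambda$. For the bound $-\lambda \geq -1$, I would simply note that by construction the $F$-jumping exponents under consideration are constrained to the half-open interval $(0,1]$, so that $-\lambda \in [-1, 0)$; in particular each root is strictly negative and bounded below by $-1$.

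There is no real obstacle here — the corollary is a formal consequence of Theorem~\ref{thm:periodic expansion of roots} combined with the already-established rationality of $F$-jumping exponents, together with the tautological observation that the defining interval is $(0,1]$. The only thing worth flagging explicitly is that the upper bound $\lambda \leq 1$ in the theorem statement is what produces the a priori nonobvious lower bound $-1$ on the roots, a feature which distinguishes the positive-characteristic theory from the classical complex one (as already remarked in the introduction).
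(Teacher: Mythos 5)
Your proof is correct and matches the paper, which also derives the corollary as a direct consequence of Theorem~\ref{thm:periodic expansion of roots}. One small redundancy: since the theorem already restricts the roots to opposites of elements of $(0,1]\cap\mathbb{Z}_{(p)}$ and $\mathbb{Z}_{(p)}\subset\mathbb{Q}$, rationality is automatic and you do not actually need to invoke Theorem~\ref{thm:F-jumps}(2).
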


\section{Examples} \label{section: ex}

In this section, we use Theorem \ref{thm:periodic expansion of roots} to compute the roots of the $b$-function in some examples.

\begin{ex} Let $f=x$ in $\mathbb{F}_p[x].$ The set of roots of the $b$-function of $f$ is $\{-1\}.$ Note that the Bernstein-Sato polynomial of $x$ in $\mathbb{C}[x]$ is $(s+1).$
\end{ex}

\begin{ex} Let $f=x_1^2+\dots+x_n^2$ in $\mathbb{F}_p[x_1, \dots, x_n],$ where $n\geq2$ and $p>2.$ The only $F$-jumping exponent of $f$ in $(0,1]$ is 1 (\cite[Example 6.16]{MR2469353}). Thus by Theorem \ref{thm:periodic expansion of roots}, the set of roots of the $b$-function of $f$ is $\{-1\}.$ Note that the Bernstein-Sato polynomial of $x_1^2+\dots+x_n^2$ in $\mathbb{C}[x_1, \dots, x_n]$ is $(s+\frac{n}{2})(s+1)$ (\cite[Example 6.2]{MR1943036}).\end{ex}

\begin{ex} 

For all $n$ and $j, 1\leq j\leq n,$ let $\alpha_j$ be a positive integer.  
It is well-known (\cite[Theorem 6.10]{MR1974679}) that the $F$-jumping exponents of the element $f=x_1^{\alpha_1}\dots x_n^{\alpha_n}$ of $\mathbb{F}_p[x_1,\dots,x_n]$ in $(0,1]$ are $\bigcup_{1\leq j\leq n} \Big\{\frac{l}{\alpha_j}|1\leq l\leq \alpha_j\Big\}.$
Thus by Theorem \ref{thm:periodic expansion of roots}, the set of roots of the b-function of $f$ is $$\bigcup_{1\leq j\leq n} \Big\{-\frac{l}{\alpha_j}|1\leq l\leq \alpha_j\Big\}\cap \mathbb{Z}_{(p)}.$$

Note that by \cite[Lemma 6.10]{MR1943036}, the Bernstein-Sato polynomial of the complex polynomial $x_1^{\alpha_1}\dots x_n^{\alpha_n}$  
is $\Pi_{1\leq j\leq n} \Pi_{1\leq l\leq \alpha_j} (s+\frac{l}{\alpha_j}).$
\end{ex}

\begin{ex} 
Suppose that $p$ is at least $5$ and let $f_p=x^2+y^3$ in $\mathbb{F}_p[x,y].$  
By \cite[Example 6.14]{MR2469353}, the $F$-jumping exponents of $f_p$ in $(0,1]$ are: $\{\frac{5}{6}, 1\} \textit{ if } p=1 \text{ mod } 3$ and $\{\frac{5}{6}-\frac{1}{6p}, 1\} \textit{ if } p=2\text{ mod } 3.$
Hence by Theorem \ref{thm:periodic expansion of roots}, the roots of $b_{f_p}$ are:

$$\Big\{-1, -\frac{5}{6}\Big\} \textit{ if } p=1\text{ mod } 3$$ and
$$\Big\{-1\Big\} \textit{ if } p=2\text{ mod } 3.$$

Note that by \cite[Example 6.19]{MR1943036}, the Bernstein-Sato polynomial of $f=x^2+y^3$  
is $$b_f=(s+\frac{7}{6})(s+1)(s+\frac{5}{6}).$$

\end{ex}

\begin{ex} \label{ex: Takagi}

Let $f_p=x^5+y^4+x^3y^2$ in  $\mathbb{F}_p[x,y].$ By \cite[Example 4.5]{MR2185754}, if $p=19\text{ mod } 20,$ then $\frac{9p-11}{20(p-1)}$ is a $F$-jumping exponent of $f_p.$ 
Thus Theorem \ref{thm:periodic expansion of roots} implies that if $p=19\text{ mod } 20,$ then 
$$-\frac{9p-11}{20(p-1)}$$ is a root of the $b$-function of $f_p.$
\end{ex}

\bibliographystyle{plain}
\bibliography{bibfilex}

\end{document}